\DeclareMathOperator{\perm}{Sym}
\DeclareMathOperator{\aut}{Aut} 
\DeclareMathOperator{\soc}{soc}
\DeclareMathOperator{\oo}{O}
\DeclareMathOperator{\kndo}{End}
\DeclareMathOperator{\gl}{GL} 
\DeclareMathOperator{\syl}{Syl}
\DeclareMathOperator{\R}{R}
\DeclareMathOperator{\frat}{Frat}
\newcommand{\gen}[1]{\left\langle#1\right\rangle} 
\newcommand{\al}{\alpha}
\newcommand{\st}{such that }
\newcommand{\ifa}{if and only if }
\newcommand{\gr}{group }
\newcommand{\w}[1]{\widetilde{#1}}
\newcommand{\fff}{\mathfrak{F}_2}
\newcommand{\ccc}{\mathfrak{C}}
\newcommand{\ff}{\mathfrak{F}}
\newcommand{\ddd}{\mathfrak{D}}
\newcommand{\G}{\Gamma}
\newcommand{\nn}{\mathrel{\unlhd}}
\newcommand{\gf}{\G_\ff}
\newcommand{\isod}{\mathcal{I}_\dd}
\newcommand{\iso}{\mathcal{I}_\ff}
\newcommand{\isoc}{\mathcal{I}_\ccc}
\newcommand{\isoo}{\mathcal{I}_{\fff}}
\newcommand{\dd}{\mathfrak{D}}
\newtheorem{thm}{Theorem}
\newtheorem{cor}[thm]{Corollary}
 \newtheorem{lemma}[thm]{Lemma}
\newtheorem{prop}[thm]{Proposition} \newtheorem{rem}[thm]{Remark}
 \newtheorem{defn}[thm]{Definition}
\numberwithin{equation}{section}
\renewcommand{\footnote}{\endnote}
\newcommand{\ignore}[1]{}\makeglossary
\begin{document}
	\bibliographystyle{amsplain}
\title[Semiregularity and connectivity of the non-$\mathfrak F$ graph of a finite group]{Semiregularity and connectivity\\ of the non-$\mathfrak F$ graph of a finite group}

\author{Andrea Lucchini}
\address{Andrea Lucchini\\ Universit\`a di Padova\\  Dipartimento di Matematica \lq\lq Tullio Levi-Civita\rq\rq\\ Via Trieste 63, 35121 Padova, Italy\\email: lucchini@math.unipd.it}
\author{Daniele Nemmi}
\address{Daniele Nemmi\\ Universit\`a di Padova\\  Dipartimento di Matematica \lq\lq Tullio Levi-Civita\rq\rq\\ Via Trieste 63, 35121 Padova, Italy\\email: dnemmi@math.unipd.it}


\begin{abstract}Given a class $\mathfrak F$ of finite groups, we consider the graph $\w{\G}_\ff(G)$  whose vertices are the elements of $G$ and where two  vertices $g,h\in G$  are adjacent \ifa $\gen{g,h}\notin\ff$. Moreover we denote by $\iso(G)$ the set of the isolated vertices of $\w{\G}_\ff(G).$ 
	 We  address the following question: 
	to what extent the fact that $\iso(H)$ is a subgroup of $H$
	for any $H\leq G,$ implies that the graph $\gf(G)$ obtained from $\w{\G}_\ff(G)$ 
 by deleting the isolated vertices is a connected graph?
\end{abstract}
\maketitle

\section{Introduction}
Let $\mathfrak F$ be a class of finite groups and $G$ a finite group. Consider the graph $\w{\G}_\ff(G)$  whose vertices are the elements of $G$ and where two  vertices $g,h\in G$  are adjacent \ifa $\gen{g,h}\notin\ff$. Moreover  denote by $\iso(G)$ the set of isolated vertices of $\w{\G}_\ff(G)$. In \cite{nof} we defined  the non-$\ff$ graph $\gf(G)$ of $G$ as the subgraph of $\w{\G}_\ff(G)$ obtained by deleting the isolated vertices.  In that paper we concentrated our attention in the particular case when $\ff$ is a saturated formation. In particular we addressed the question whether $\iso(G)$ is a subgroup of $G$. This is not always the case, however it occurs for several saturated formations and, more in general, we called semiregular a class $\ff$ with the property
that $\iso(G)$ is a subgroup of $G$ for every finite group $G$.
In the same paper we called connected a class $\ff$ with the property that the graph $\gf(G)$ is connected for any finite group $G$. The results obtained in \cite{nof} indicate that often a semiregular formation is connected. This occurs for example for the formations of abelian groups, nilpotent groups, soluble groups, supersoluble groups, groups with nilpotent derived subgroup, groups with Fitting length less or equal then $t$ for any $t\in \mathbb N.$

This stimulated us to investigate to what extent the semiregularity of $\ff$ implies its connectivity. We addressed the following more general question. Suppose that $\ff$ is a class containing only soluble groups and closed under taking subgroups and that a finite group $G$ has the property that $\iso(H)$ is a subgroup of $H$ for any $H\leq G$. Does this implies that $\gf(G)$ is connected?

Our main theorem (Theorem \ref{main}) says that for a fixed class $\ff$, either the answer is affirmative, or a minimal counterexample $G$ has a very peculiar behaviour. Indeed $G$ is soluble, it cannot be generated with 2 elements and
there exists an epimorphism 
$\pi: G \to  V^t \rtimes H,$ where
$H$ is 2-generated, $V$ is a faithful irreducible $H$-module,  $t=1+\dim_{\kndo_H(V)}(V),$ such that the following properties hold.
Denote by $\mathcal W$ be the set of the $H$-submodules  of $V^t$ that are $H$-isomorphic to $V^{t-1},$ 
and for any $W \in \mathcal W$, let $M_W=
\pi^{-1}(WH).$ Then there exists $W \in \mathcal W$ such that if $\overline W \neq W,$ then
there is no edge in $\gf(G)$ connecting two elements of $M_{\overline W}$ and one of  the two following situations occurs:
\begin{enumerate}
	\item any egde in $\gf(G)$ belongs to the subgraph induced by a conjugate of $M_W;$
	\item $H$ is cyclic of prime order and any egde in $\gf(G)$ belongs either to the subgraph induced by a conjugate of $M_W$ or to the subgraph induced by $N=\pi^{-1}(V^t).$
\end{enumerate}
We construct two examples in which these two situations occur.
In the first example, $H$ is a quaternion group, $V\cong C_p \times C_p$, where $p$ is an arbitrary odd prime, and $\gf(G)$ has $p$ connected components, corresponding to the non-isolated vertices in the subgraphs of $\gf(G)$ induced by the $p$ different conjugates of $M_W$. In the second example, $H\cong C_3$, $V\cong C_2\times C_2$   and $\gf(G)$ has two connected components: one consisting of the non-isolated vertices of $\gf(N),$ the other one consisting of the union of the sets of non-isolated vertices in the four conjugates of $M_W.$

Despite the presence of these examples, Theorem \ref{main} can be used to prove that the following classes of finite groups are connected: the class of cyclic finite groups, the class of finite groups whose order is divisible by only one prime, the class of finite groups whose order is divisible by at most two primes. 
\hbox{}

\bibliographystyle{alpha}

\section{Proof of the main theorem}

We begin this section by recalling some known results and proving some preliminary lemmas, needed in the proof.
\begin{prop}\cite{Ga}\label{gaz}
Let $N$ be a normal
subgroup of a finite group $G$ and suppose that $\langle g_1,\dots g_k\rangle N=G$. If $k \geq d(G),$ then there
exist $n_1,\dots,n_k \in N$ so that $\langle g_1n_1,\dots g_kn_k\rangle=G$.
\end{prop}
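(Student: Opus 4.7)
This is Gaschütz's classical generation lemma. The proof proceeds by induction on $|N|$ and reduces through a minimal normal subgroup.

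First I would argue by induction on $|N|$, the case $N=1$ being immediate. Assuming $N\neq 1$, I choose a subgroup $M$ minimal among the nontrivial normal subgroups of $G$ that are contained in $N$; thus $M$ is a minimal normal subgroup of $G$ with $M\leq N$. I apply the inductive hypothesis to $G/M$ with $N/M$ in place of $N$, which is legitimate since $d(G/M)\leq d(G)\leq k$ and $\langle g_1M,\dots,g_kM\rangle(N/M)=G/M$. This yields elements $n_1,\dots,n_k\in N$ such that $\langle g_1n_1,\dots,g_kn_k\rangle M=G$. Absorbing the $n_i$ into the $g_i$ reduces the problem to the following sharpened statement: given $\langle g_1,\dots,g_k\rangle M=G$, exhibit $m_1,\dots,m_k\in M$ with $\langle g_1m_1,\dots,g_km_k\rangle=G$.

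For each tuple $(m_1,\dots,m_k)\in M^k$ set $H(m_1,\dots,m_k):=\langle g_1m_1,\dots,g_km_k\rangle$; then $H(m_1,\dots,m_k)M=G$, so either $H(m_1,\dots,m_k)=G$ (the desired conclusion) or $H(m_1,\dots,m_k)$ is a proper supplement of $M$ in $G$. The argument now splits according to whether $M$ is abelian. In the non-abelian case, $M$ is a direct product of isomorphic non-abelian simple groups and a structural/commutator argument shows that the proper supplements cannot account for all $|M|^k$ lifts, so some good tuple exists. In the abelian case $M$ is elementary abelian of prime exponent and simple as an $\mathbb{F}_p[G/M]$-module, so the proper supplements are exactly the complements of $M$ in $G$; since each complement $L$ maps isomorphically onto $G/M$, the lift lying in $L^k$ is uniquely determined, so the ``bad'' lifts are in bijection with the complements. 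The total number of complements is bounded by $|Z^1(G/M,M)|\leq |M|^{d(G/M)}\leq |M|^k$, and a refinement using the simplicity of $M$ as a $G/M$-module together with the $M$-orbit structure on the set of complements gives the strict inequality needed to guarantee at least one good tuple.

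The crux of the proof is this counting in the abelian case, where the hypothesis $k\geq d(G)$ enters in an essential way: for $k<d(G)$ no lift can generate $G$, so the estimate is sharp, and the delicate point is to extract the strict inequality from the general bound on $|Z^1(G/M,M)|$. Once this counting is carried out, the inductive step is complete and the proposition follows.
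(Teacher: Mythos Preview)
The paper does not prove this proposition; it is quoted from Gasch\"utz and used as a black box, so there is no in-paper proof to compare against, only the classical one.

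Your reduction via a minimal normal subgroup $M\le N$ is the standard opening, and in the abelian case your bijection between bad lifts and complements of $M$ is correct. The gap is at the decisive step. The chain
\[
\#\{\text{complements of }M\}\;=\;|Z^{1}(G/M,M)|\;\le\;|M|^{\,d(G/M)}\;\le\;|M|^{k}
\]
is only an inequality, and the ``refinement using the simplicity of $M$ and the $M$-orbit structure on complements'' that you invoke does not by itself yield the strict inequality you need: when $M$ is non-central the $M$-action on complements is free, so the count is a multiple of $|M|$, but that is perfectly compatible with equality $|M|^{k}$. In the non-abelian case you offer only ``a structural/commutator argument'', which is not a proof; worse, proper supplements of a non-abelian minimal normal $M$ need not be complements, so even the bijection you relied on collapses.

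Gasch\"utz's actual argument avoids all of this. It does not split into cases and never estimates the number of complements. The key point is that for any $H\le G$ the number of $(n_1,\dots,n_k)\in N^{k}$ with every $g_in_i\in H$ equals $|H\cap N|^{k}$ when $HN=G$ and $0$ otherwise; by M\"obius inversion over the subgroup lattice, the number of \emph{good} lifts of $(g_1N,\dots,g_kN)$ is therefore
\[
\sum_{\substack{H\le G\\ HN=G}}\mu(H,G)\,|H\cap N|^{k},
\]
which depends only on $G$, $N$ and $k$, not on the chosen tuple. Once this constancy is established, the hypothesis $k\ge d(G)$ finishes the proof at once: some $k$-tuple generates $G$, so its projection to $G/N$ has at least one good lift, hence every generating $k$-tuple of $G/N$ does. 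This is where $k\ge d(G)$ really enters, and it replaces the strict inequality you were unable to extract.
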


Recall that the generating graph of a finite group $G$ is the graph whose vertices are the elements of $G$ and where $g_1$ and $g_2$ are adjacent if and only if $\langle g_1, g_2 \rangle=G.$
	
\begin{lemma}\label{unodue}
	Let $G$ be a 2-generated finite soluble group and denote by $\Omega(G)$ the set of non-isolated vertices of the generating graph of $G$. Assume that $N$ is a minimal normal subgroup of $G$ and that $g_1, g_2$ are two elements of $G$ with $G=\langle g_1, g_2, N\rangle.$ Then there exists $i\in \{1,2\}$ with the property that $g_iN\subseteq \Omega(G).$
\end{lemma}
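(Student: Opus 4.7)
The plan is to use Proposition \ref{gaz} to reduce to the case $\langle g_1, g_2\rangle = G$, split on whether $N \leq \Phi(G)$, and in the complemented case argue by contradiction using the parameterization of complements of $N$ by $1$-cocycles.

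Since $G$ is $2$-generated and $\langle g_1, g_2\rangle N = G$, Proposition \ref{gaz} yields $u_1, u_2 \in N$ with $\langle g_1 u_1, g_2 u_2\rangle = G$. Because the conclusion $g_iN\subseteq\Omega(G)$ depends only on the cosets $g_i N$, I replace $g_i$ by $g_i u_i$ and henceforth assume $\langle g_1, g_2\rangle = G$. If $N \leq \Phi(G)$, then for every $n \in N$ we have $\langle g_1 n, g_2\rangle N = G$, and the non-generator property of $\Phi(G)$ forces $\langle g_1 n, g_2\rangle = G$; hence $g_1 N \subseteq \Omega(G)$ and we are done.

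From now on assume that $N$ is complemented. Since $G$ is soluble and $N$ is minimal normal, $N$ is elementary abelian; fix a complement $H$, write $G = N \rtimes H$, and let $r, s \in H$ be the representatives of $\bar g_1, \bar g_2$, so that $H = \langle r, s\rangle$. Suppose for contradiction that there exist $n_1, n_2 \in N$ with $g_1 n_1$ and $g_2 n_2$ both isolated in the generating graph of $G$. For each $m \in N$ the subgroup $K_m := \langle g_1 n_1, g_2 m\rangle$ satisfies $K_m N = G$ while $K_m \neq G$ (by isolation of $g_1 n_1$); since $N$ is abelian, $K_m \cap N$ is normalized by $K_m$ and by $N$, hence by $G = K_m N$, so minimality of $N$ forces $K_m \cap N = 1$ and $K_m$ is a complement of $N$. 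Every complement has the form $H_\delta = \{\delta(h) h : h \in H\}$ for a unique $1$-cocycle $\delta \in Z^1(H, N)$, and since $H = \langle r, s\rangle$, the map $\delta \mapsto (\delta(r), \delta(s))$ embeds $Z^1(H, N)$ into $N \times N$ as a subgroup $Z$. Writing $g_1 = a_1 r$ and $g_2 = a_2 s$ with $a_1, a_2 \in N$, one checks that $g_1 n \in H_\delta$ iff $\delta(r) = a_1 + {}^r n$ (additive notation on $N$), and analogously for $g_2 m$. Thus the fact that $K_m$ is a complement for every $m \in N$ places the entire coset $\{a_1 + {}^r n_1\} \times N$ inside $Z$, which forces $\{0\} \times N \subseteq Z$. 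By the symmetric argument from isolation of $g_2 n_2$, also $N \times \{0\} \subseteq Z$, so $Z = N \times N$. But then $(a_1, a_2) \in Z$ produces a complement $H_\delta$ containing both $g_1$ and $g_2$, contradicting $\langle g_1, g_2\rangle = G$.

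The main obstacle I anticipate is the identification step showing that $K_m \cap N$ is a $G$-submodule of $N$ and therefore trivial by minimality; this hinges on both the abelianness of $N$ (coming from solubility) and on $K_m$ projecting onto $G/N$. Once the complements are parameterized by the subgroup $Z \leq N \times N$, the rest of the argument is a direct linearization that propagates the isolation hypothesis from a single coset of $\{0\} \times N$ (and symmetrically of $N \times \{0\}$) to all of $N \times N$.
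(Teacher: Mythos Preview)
Your argument is correct. The paper's own proof is a two-line appeal to \cite[Lemma~1]{symb}: assuming some $g_1n^*\notin\Omega(G)$, for each $n\in N$ one has $\langle g_1n^*,g_2n,N\rangle=G$ but $\langle g_1n^*,g_2n\rangle<G$, and the cited lemma immediately produces $n'\in N$ with $\langle g_1n',g_2n\rangle=G$, whence $g_2n\in\Omega(G)$.

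Your route is genuinely different in that it is self-contained (modulo Gasch\"utz's lemma, which is already Proposition~\ref{gaz} in the paper) and essentially reproves the relevant content of \cite[Lemma~1]{symb} in situ. The Gasch\"utz reduction to $\langle g_1,g_2\rangle=G$ lets you dispose of the Frattini case trivially and, in the complemented case, your cocycle linearization is clean: the key point that $K_m\cap N\trianglelefteq G$ (hence is trivial) is exactly where solubility and minimality of $N$ enter, and from there the inclusion of a full coset $\{c\}\times N$ in the subgroup $Z\leq N\times N$ forces $\{0\}\times N\leq Z$, and symmetrically. The paper's approach is shorter and black-boxes this mechanism; yours exposes it. Both ultimately rest on the same complement-counting phenomenon, but your version would survive in a context where \cite{symb} is not being cited.
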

\begin{proof}
	Suppose $g_1n^* \notin \Omega(G)$ for some $n^* \in N.$ If $n\in N,$ then $\langle g_1n^*, g_2n, N\rangle=G$, while $\langle g_1n^*, g_2n
	\rangle<G.$ By \cite[Lemma 1]{symb}, there exists $n^\prime\in N$ with $\langle g_1n^\prime, g_2n
	\rangle=G.$ Hence $g_2n\in \Omega(G).$
\end{proof}

\begin{defn}Let $\ff$ be a class of finite groups. We define $\ff_2$ as the class of the finite groups $G$ with the property that any 2-generated subgroup of $G$ is in $\ff.$
\end{defn}

Assume that $\ff$ is closed under taking subgroups. We say that $\ff$ is 2-recognizable whenever a group $G$ belongs to $\ff$ if all 2-generated subgroups of $G$ belong
to $F$. If $\ff$ is not 2-recognizable, then $\ff_2 > \ff.$ For example, if $\ff$ is the class of the metabelian finite groups, and $G$ is a Sylow 2-subgroup of $\perm(8)$, then $G\in \ff_2 \setminus \ff.$
However the following can be immediately seen.
\begin{lemma}
If $G$ is a finite group, then $\gf(G)=\G_{\ff_2}(G).$ In particular $\iso(G)=\isoo(G).$
\end{lemma}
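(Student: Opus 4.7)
The statement reduces to the elementary observation that, within the class of $2$-generated groups, $\ff$ and $\ff_2$ coincide; since every edge of $\w\G_\ff(G)$ is determined by a $2$-generated subgroup $\langle g,h\rangle$, the two graphs must coincide. I would structure the argument as follows.

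First I would note that an edge in either graph depends only on the isomorphism type (in fact on the membership in the class) of the $2$-generated subgroup $\langle g,h\rangle$. So it suffices to prove the following claim: for every $2$-generated finite group $K$, $K\in\ff$ if and only if $K\in\ff_2$. The forward direction uses the standing assumption that $\ff$ is closed under taking subgroups: if $K\in\ff$, then every subgroup of $K$ lies in $\ff$, in particular every $2$-generated subgroup of $K$ does, which is exactly the definition of $K\in\ff_2$. The reverse direction is immediate from the definition of $\ff_2$: if $K$ is $2$-generated and every $2$-generated subgroup of $K$ belongs to $\ff$, then $K$ itself, being a $2$-generated subgroup of itself, is in $\ff$.

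Next I would apply this claim edgewise. For any two vertices $g,h\in G$, we have $\langle g,h\rangle\in\ff$ iff $\langle g,h\rangle\in\ff_2$, because $\langle g,h\rangle$ is $2$-generated. Consequently $g$ and $h$ are adjacent in $\w\G_\ff(G)$ if and only if they are adjacent in $\w\G_{\ff_2}(G)$, so the two graphs $\w\G_\ff(G)$ and $\w\G_{\ff_2}(G)$ have identical edge sets. From this both conclusions of the lemma follow at once: a vertex is isolated in one graph iff it is isolated in the other, giving $\iso(G)=\isoo(G)$; and since removing isolated vertices from identical graphs yields identical graphs, we obtain $\gf(G)=\G_{\ff_2}(G)$.

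There is no real obstacle here; the content of the lemma is essentially definitional once one realizes that the non-$\ff$ graph sees only the $2$-generated subgroups of $G$, and that on $2$-generated groups the operation $\ff\mapsto\ff_2$ is the identity when $\ff$ is subgroup-closed. The only thing to be careful about is the stated hypothesis of subgroup-closure for the forward implication, which the paragraph preceding the definition of $\ff_2$ explicitly assumes.
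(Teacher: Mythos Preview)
Your argument is correct and is exactly the natural justification; the paper itself gives no proof at all, stating only that the lemma ``can be immediately seen.'' Your observation that a $2$-generated group lies in $\ff$ if and only if it lies in $\ff_2$ (using subgroup-closure for one direction), together with the fact that edges depend only on $\langle g,h\rangle$, is precisely the content behind that remark.
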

\begin{lemma}\label{Icyc}
Let $\ff$ be a class of finite groups. If $\iso(G)$ is a subgroup of $G$ and $G=\iso(G)\!\gen{g}$ for some $g\in G$, then $G\in\fff$.
\end{lemma}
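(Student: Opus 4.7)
The plan is to establish the stronger fact that $G=\iso(G)$ and then to deduce $G\in\fff$ directly from the definition.

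First I would fix an arbitrary element $z\in G$ and, using the hypothesis $G=\iso(G)\gen{g}$, write $z=ug^k$ for some $u\in\iso(G)$ and some integer $k$. Since $g^k$ is redundant when $g$ is already among the generators, one has $\gen{g,z}=\gen{g,ug^k}=\gen{u,g}$. Now $u$ is an isolated vertex of $\w{\G}_\ff(G)$, which by definition means $\gen{u,y}\in\ff$ for every $y\in G$; in particular $\gen{u,g}\in\ff$. Hence $\gen{g,z}\in\ff$ for every $z\in G$, so $g$ itself belongs to $\iso(G)$.

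Because $\iso(G)$ is a subgroup of $G$ by hypothesis and now contains $g$, we conclude $G=\iso(G)\gen{g}=\iso(G)$. Finally, for any two elements $a,b\in G$, the element $a$ lies in $\iso(G)$, so $\gen{a,b}\in\ff$. Thus every $2$-generated subgroup of $G$ lies in $\ff$, which is precisely the statement $G\in\fff$.

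The argument is very short, and the only point that needs a moment's thought is the identity $\gen{g,ug^k}=\gen{u,g}$, which merely says that adjoining $g^k$ to a generating set already containing $g$ changes nothing; I do not foresee any genuine obstacle.
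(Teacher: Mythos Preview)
Your argument is correct and is essentially identical to the paper's own proof: both write an arbitrary element as $ig^{\alpha}$, use $\gen{g,ig^{\alpha}}=\gen{g,i}\in\ff$ to conclude $g\in\iso(G)$, and then deduce $G=\iso(G)$, hence $G\in\fff$. The only cosmetic difference is that the paper invokes the equality $\iso(G)=\isoo(G)$ explicitly, whereas you spell out the final implication directly.
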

\begin{proof}
	Let $x$ be an arbitrary element of $G$. We have $x=ig^\al$ for some $i\in\iso(G)$ and $\al\in\mathbb{Z}$. Moreover $\gen{g,ig^\al}=\gen{g,i}\in\ff$, since $i\in\iso(G)$. Hence $g\in\iso(G)=\isoo(G)$, so $G=\isoo(G)$ and therefore $G\in\fff$.
\end{proof}

\begin{thm}\cite[Theorem 16.6]{dh}\label{masu}
	Let $L$ and $M$ be inconjugate maximal subgroups of a finite soluble group $G.$ If $M_G \not\leq L_G,$ then $L\cap M$ is a maximal subgroup of $L.$
\end{thm}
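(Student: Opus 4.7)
My approach is induction on $|G|$, reducing first to the case $L_G \cap M_G = 1$ and then identifying $L\cap M$ with a maximal subgroup via a chief-factor complementation.

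For the reduction, set $K = L_G \cap M_G$. If $K \neq 1$, pass to $\bar G = G/K$. The images $\bar L = L/K$ and $\bar M = M/K$ are maximal in $\bar G$ and remain inconjugate, because any conjugator in $\bar G$ normalises $K$ and lifts to a conjugator in $G$. Moreover $(\bar M)_{\bar G} = M_G/K$ and $(\bar L)_{\bar G} = L_G/K$, so $M_G \not\leq L_G$ translates faithfully to $(M_G/K) \not\leq (L_G/K)$. By induction $\bar L \cap \bar M = (L\cap M)/K$ is maximal in $\bar L$, whence $L\cap M$ is maximal in $L$.

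Now assume $L_G \cap M_G = 1$. In particular $M_G \neq 1$, so by solubility $M_G$ contains a minimal normal subgroup $N$ of $G$, which is elementary abelian. Since $N \leq M_G$ we have $N \cap L_G \leq L_G \cap M_G = 1$; hence $N \not\leq L$ (otherwise $N$ would sit inside the core $L_G$). Maximality of $L$ then forces $G = NL$. The subgroup $L \cap N$ is normalised by $L$ (because $N$ is normal in $G$) and centralised by $N$ (because $N$ is abelian), so $L \cap N$ is normal in $NL = G$; combined with $L \cap N \leq L_G$ and $L_G \cap N = 1$, this gives $L \cap N = 1$.

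Thus $L$ complements $N$ in $G$, and the map $l \mapsto lN$ is an isomorphism $L \to G/N$. Under it $L\cap M$ corresponds to $(L\cap M)N/N$, and Dedekind's modular law (using $N \leq M$) together with $G = LN$ yields $(L\cap M)N = M \cap LN = M$. So $L \cap M$ maps onto $M/N$, which is maximal in $G/N$ because $M$ is maximal in $G$ with $N \leq M$. Hence $L\cap M$ is maximal in $L$. The only delicate point is the initial bookkeeping in the reduction step, verifying that inconjugacy of $L, M$ and the non-inclusion $M_G \not\leq L_G$ both pass cleanly to $G/K$; after that the argument is the standard chief-factor complement trick in a soluble group.
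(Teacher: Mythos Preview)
The paper does not supply its own proof of this statement: it is quoted verbatim as \cite[Theorem~16.6]{dh} and used as a black box. There is therefore nothing in the paper to compare your argument against.

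That said, your proof is correct and is essentially the standard one. The reduction modulo $K=L_G\cap M_G$ is clean (cores and inconjugacy do pass to $G/K$ exactly as you say), and in the base case the key observation that $L$ complements the minimal normal subgroup $N\leq M_G$ is precisely how the result is usually established. One remark: the inconjugacy hypothesis is actually redundant for the direction you are proving, since $M_G\not\leq L_G$ already forces $M_G\neq L_G$ and hence $L$ and $M$ cannot be conjugate; you never use inconjugacy beyond this, which is why your bookkeeping about it in the reduction step, while correct, is not strictly needed.
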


\begin{lemma}\label{duemax}
	Let $G$ be a finite soluble group. Suppose that $M$ and $L$ are maximal subgroups of $G$ with $M, L \notin \fff$ and that
	the graphs $\gf(L)$ and $\gf(M)$ are connected. Denote by $\Gamma_M$ (resp. $\Gamma_L$) the connected component of
	$\gf(G)$ containing the  vertices
	of $\gf(M)$ (resp. $\gf(L)$).
	\begin{enumerate}[label=(\alph*)]
		\item 
		If $L_G\not\leq M_G,$ then either $\Gamma_L=\Gamma_M$ or $L\cap M \subseteq \iso(L).$ 
		\item If $M_G\not\leq L_G$ and $L_G\not\leq M_G,$ then $\Gamma_L=\Gamma_M.$
		\item If $M_G < L_G$ and $\Gamma_L \neq \Gamma_M,$ then $L_G/M_G$ is the unique minimal normal subgroup of $G/M_G.$ 
		\item If $M_G < L_G$ and $\Gamma_L \neq \Gamma_M,$ then
		either $L$ is normal in $G$ or $L/L_G$ is cyclic.
	\end{enumerate}
\end{lemma}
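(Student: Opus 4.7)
The plan is to prove the four assertions in order, using Theorem \ref{masu} as the main structural tool to turn each hypothesis on cores into maximality of $L\cap M$ inside $L$, $M$, or both, and then propagating through the connectivity of $\gf(L)$ and $\gf(M)$ via Proposition \ref{gaz} and Lemma \ref{unodue}. The unifying strategy for showing $\Gamma_L=\Gamma_M$ is to exhibit either a single vertex that is non-isolated in both $\w{\G}_\ff(L)$ and $\w{\G}_\ff(M)$, or a $\gf(G)$-edge with one endpoint drawn from each induced subgraph.

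For (a), I first observe that $L_G\not\leq M_G$ forces $L$ and $M$ to be non-conjugate, so Theorem \ref{masu} applied with the roles of $L$ and $M$ interchanged yields that $L\cap M$ is a maximal subgroup of $M$. I then prove the either-or contrapositively: fix $x\in L\cap M$ and $y\in L$ with $\langle x,y\rangle\notin\ff$, which places $x\in\Gamma_L$. If $y\in M$, the same edge also lies in $\gf(M)$ and $\Gamma_L=\Gamma_M$ is immediate. If $y\notin M$, I invoke $M\notin\fff$ to pick $a,b\in M$ with $\langle a,b\rangle\notin\ff$, and combine the maximality of $L\cap M$ in $M$ with a Gasch\"utz-style replacement via Proposition \ref{gaz}, applied along a chief factor of $M$ contained in $L\cap M$, to adjust $(a,b)$ into a non-$\ff$ pair of some 2-generated subgroup of $M$ having at least one entry in $L\cap M$; this provides the required cross-edge.

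Part (b) is then immediate: (a) applies in both directions, and if $\Gamma_L\neq\Gamma_M$ then $L\cap M\subseteq\iso(L)\cap\iso(M)$, so $L\cap M\in\fff$. Theorem \ref{masu} also makes $L\cap M$ maximal in both $L$ and $M$; combining this with $L\notin\fff$, Lemma \ref{Icyc}, and Lemma \ref{unodue}, I extract a non-$\ff$ 2-generated subgroup of $L$ with one generator in $L\cap M$, contradicting $L\cap M\subseteq\iso(L)$. For (c) and (d), I pass to $\bar G=G/M_G$, in which $\bar M$ is core-free maximal and $\bar L_G>1$; from (a), $L\cap M\subseteq\iso(L)$. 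If (c) failed, $\bar G$ would contain a non-trivial normal subgroup distinct from $\bar L_G$ lying under or transverse to it; pulling this back gives a normal subgroup $N$ of $G$ along whose cosets Proposition \ref{gaz} lets me reposition a non-$\ff$ pair of $\bar M$ into one meeting $\bar L\cap\bar M$, contradicting $L\cap M\subseteq\iso(L)$. For (d), the uniqueness from (c) makes $\bar L_G$ a faithful irreducible $\bar M$-module and identifies $L/L_G$ with $(L\cap M)/M_G$; if $L$ were not normal in $G$, assuming $L/L_G$ requires two generators and applying Lemma \ref{unodue} to a 2-generator lift inside $L$, together with the bridging construction from (c), would again contradict $\Gamma_L\neq\Gamma_M$, leaving cyclicity of $L/L_G$ as the only option.

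The principal obstacle is the bridging step at the end of (a): it is here that Gasch\"utz-style replacement within a chief factor of $M$ contained in $L\cap M$ is genuinely used to transfer a non-$\ff$ edge of $\gf(M)$ into one touching $L\cap M$. Everything else is downstream of this construction, combined with symmetry arguments and the chief-factor analysis of the soluble group $G/M_G$.
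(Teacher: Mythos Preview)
Your bridging step in (a) has a real gap. Starting from $x\in(L\cap M)\setminus\iso(L)$ you have $x\in\Gamma_L$; your Gasch\"utz replacement then produces some $a'\in L\cap M$ that is non-isolated in $\gf(M)$, hence $a'\in\Gamma_M$. But to conclude $\Gamma_L=\Gamma_M$ you need a path in $\gf(G)$ from $x$ to $a'$, and the only available route is through $\gf(L)$; that requires $a'\notin\iso(L)$, which your construction does not guarantee. (There is also a problem with the replacement itself: Proposition~\ref{gaz} lets you perturb generators inside a normal subgroup while preserving generation of a \emph{fixed} group; it does not let you move an arbitrary non-$\ff$ pair of $M$ so that one entry lands in $L\cap M$ while keeping $\langle a',b'\rangle\notin\ff$.) The same mismatch recurs in (c): a non-$\ff$ pair in $M$ with one entry in $L\cap M$ would at best contradict $L\cap M\subseteq\iso(M)$, but under $M_G<L_G$ part (a) only gives $L\cap M\subseteq\iso(L)$, so nothing is contradicted. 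Since your (d) rests on this ``bridging construction from (c)'', it inherits the gap.

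The paper's argument for (a) avoids all of this and is essentially one line: if $\Gamma_L\neq\Gamma_M$ then no element of $L\cap M$ can be simultaneously non-isolated in $\gf(L)$ and in $\gf(M)$, so $L\cap M\subseteq\iso(L)\cup\iso(M)$; since these are subgroups, $L\cap M$ is contained in one of them, and $L\cap M\leq\iso(M)$ is impossible because Theorem~\ref{masu} makes $L\cap M$ maximal in $M$, whence $M=\iso(M)\langle m\rangle$ and Lemma~\ref{Icyc} would give $M\in\fff$. Parts (c) and (d) use no Gasch\"utz at all: for (c) one writes $L=(L\cap M)N$ with $N/M_G=\soc(G/M_G)$ and shows that $L_G\cap M>M_G$ forces $L=(L\cap M)^L\leq\iso(L)$, contradicting $L\notin\fff$; for (d) one picks an edge $\{l,s\}$ of $\gf(L)$ with $l\in L_G$, and if $U\langle s\rangle<G$ (where $U/L_G=\soc(G/L_G)$) one traps $\{l,s\}$ inside a third maximal subgroup $S$ with $S_G>L_G$, so that (c) applied to $(M,S)$ gives $\Gamma_M=\Gamma_S=\Gamma_L$, a contradiction; hence $U\langle s\rangle=G$ and $L/L_G\cong G/U$ is cyclic.
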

\begin{proof} (a)
	If $\Gamma_M \neq \Gamma_L,$ then $L\cap M \subseteq \iso(M) \cup \iso(L).$ Thus either $L\cap M \leq \iso(L)$ or $L\cap M \leq \iso(M)$.
	In the second case, by Theorem \ref{masu},
	$L\cap M$ is a a maximal subgroup of $M,$
	hence $M=\langle L\cap M, m \rangle$ for some $m \in M.$ But then $M=\iso(M)\langle m\rangle.$ By Lemma \ref{Icyc}, $M \in \fff,$ against the hypotheses. 
	
	\noindent (b) If $\Gamma_M\neq \Gamma_L,$ then, by (a), $L\cap M \leq\iso(L) \cap \iso(M)$, but then $L, M \in \fff$, against the hypotheses.
	
	\noindent (c) Since $L_G \not\leq M_G,$ by (b) we have $K:=L\cap M\leq \iso(L).$ Let $N/M_G := \soc(G/M_G).$ Then $N \leq L_G,$ since
	$N/M_G$ is the unique minimal normal subgroup of $G/M_G$.  In particular $L=L\cap MN=(L\cap M)N=KN$, and therefore $K^L=K^N.$ If $L_G\cap M > M_G,$ then $N \leq (L_G\cap M)^N \leq K^N=K^L,$
	and therefore $L=KN=K^L\leq\iso(L)$ and $L \in \ff_2,$ against the assumptions. Hence $L_G \cap M=M_G,$ and consequently $L_G=N.$
	
	\noindent (d) Suppose $L$ is not normal in $G$. Let $U/L_G:=\soc(G/L_G)$. As in the proof of (c), we have $L=(L\cap M)N=(L\cap M)L_G.$ Since $L\notin\ff_2$ but, by (a) $L\cap M\leq\iso(L)$, there exist $l\in L_G\setminus\iso(L)$ and $s\in L$ \st $\gen{l,s}\notin\ff$.
	If $U\!\gen{s}<G$, then $U\!\gen{s}\leq S<G$ for a maximal subgroup $S$ with $L_G < U\leq S_G$. Since $\gen{l,s}\leq L\cap S$, we have $\G_L=\G_S$ and from (c), $\G_M=\G_S=\G_L$, against the assumptions. Therefore $U\!\gen{s}=G$ and $G/U\cong L/L_G$ is cyclic.
\end{proof}

The following is immediate.
\begin{lemma}\label{FFF} Let $\ff$ be a class of groups which is closed under taking subgroups and epimorphic images. Let $g,h\in G$ and $N\nn G$.
	\begin{enumerate}[label=(\alph*)]
		\item If $gN$ and $hN$ are adjacent vertices of $\gf(G/N)$, then $g$ and $h$ are adjacent vertices of $\gf(G)$.
		\item If $g\in \iso(G),$ then $gN\in \iso(G/N).$
		\item $\iso(G)^\sigma=\iso(G)$ for every $\sigma\in\aut(G)$.
	\end{enumerate}
\end{lemma}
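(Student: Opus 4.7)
The three parts should each follow essentially from the closure hypotheses on $\ff$, and the paper even remarks that the lemma is immediate. My plan is to handle them in the order (a), (b), (c), using (a) as a guiding principle and unpacking the definition of $\iso$.

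For part (a), I would argue by contraposition on the adjacency condition. Suppose $gN$ and $hN$ are adjacent in $\gf(G/N)$; in particular $gN \neq hN$ (so $g \neq h$ in $G$) and $\langle gN, hN\rangle = \langle g,h\rangle N/N \notin \ff$. Since $\ff$ is closed under epimorphic images and $\langle g,h\rangle N / N$ is an epimorphic image of $\langle g,h\rangle$, the membership $\langle g,h\rangle \in \ff$ would force $\langle g,h\rangle N/N \in \ff$, a contradiction. Hence $\langle g,h\rangle \notin \ff$, so $g$ and $h$ are adjacent in $\w{\G}_\ff(G)$; since they have a common neighbor (each other), they are non-isolated and thus adjacent in $\gf(G)$.

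For part (b), I would use the same epimorphic-image argument applied pointwise. Take any vertex $hN \in G/N$ with $hN \neq gN$. Then $h \neq g$ as elements of $G$, so by $g \in \iso(G)$ we have $\langle g,h\rangle \in \ff$; hence its epimorphic image $\langle gN, hN \rangle \in \ff$, meaning $gN$ is not adjacent to $hN$ in $\w{\G}_\ff(G/N)$. As this holds for every $hN \neq gN$, we conclude $gN \in \iso(G/N)$.

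For part (c), I would transport the isolation property along $\sigma$. Note first that a class of finite groups closed under subgroups and epimorphic images is in particular closed under isomorphism. Given $g \in \iso(G)$ and $h \neq g^\sigma$, set $h' := h^{\sigma^{-1}}$. Then $h' \neq g$, so $\langle g, h'\rangle \in \ff$, and applying $\sigma$ yields $\langle g^\sigma, h\rangle = \langle g, h'\rangle^\sigma \cong \langle g, h'\rangle \in \ff$. Hence $g^\sigma \in \iso(G)$, giving $\iso(G)^\sigma \subseteq \iso(G)$; applying the same argument to $\sigma^{-1}$ yields equality. There is no real obstacle in any of these steps: the proof is a direct unfolding of the definitions together with the closure assumptions on $\ff$.
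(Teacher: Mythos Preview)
Your proposal is correct. The paper does not actually give a proof of this lemma, merely prefacing it with ``The following is immediate''; your argument is precisely the direct unfolding of the definitions and closure hypotheses that this remark implies, so there is nothing further to compare.
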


\begin{defn}Let $\ff$ be a class of finite groups. We say that a finite group $G$ is $\ff$-semiregular if $\iso(H)\leq H$ for every $H\leq G.$
\end{defn}

\begin{lemma}\label{normale}
	Let $\ff$ be a class of finite groups with the following properties:
	\begin{enumerate}
		\item All the groups in $\ff$ are soluble.
		\item $\ff$ is closed under taking subgroups.
	\end{enumerate}
	Suppose that $G$ is  $\ff$-semiregular 
	 and $\gf(H)$ is connected for any proper subgroup 
	$H$ of $G.$ 
	If there exists a proper normal subgroup $N$ of $G$ such that
	$G\setminus N$ is contained in a unique connected component of $\gf(G),$ then either $\gf(G)$ is connected or $N$ is a maximal subgroup of $G.$
\end{lemma}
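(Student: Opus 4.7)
The plan is to argue by contradiction: assume $\gf(G)$ is disconnected and $N$ is not maximal in $G$, and produce a contradiction. First I would fix the unique connected component $\Gamma^{*}$ of $\gf(G)$ containing the non-isolated elements of $G\setminus N$, and pick a distinct component $\Gamma'$. The key initial observation is that $\Gamma'$ must be entirely contained in $N$, since any vertex of $\Gamma'$ outside $N$ would by hypothesis belong to $\Gamma^{*}$. Picking $w\in\Gamma'$ together with a neighbour $u$ of $w$ in $\gf(G)$ then forces $u\in\Gamma'\subseteq N$, so $w,u\in N$ and $\gen{w,u}\notin\ff$. This distinguished edge living inside $N$ is what I will push into any intermediate subgroup.

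Since $N$ is assumed non-maximal, I fix $N<M<G$; by hypothesis $\gf(M)$ is connected. The heart of the proof is a dichotomy according to whether $M\setminus N$ contains a non-isolated vertex of $\gf(M)$. In the first case, say $m\in M\setminus N$ is non-isolated in $\gf(M)$, I use that $w,u\in N\leq M$ are adjacent in $\gf(M)$ to conclude that $w,u,m$ all lie in the unique component of $\gf(M)$; connectivity then yields a path in $\gf(M)\subseteq\gf(G)$ from $w$ to $m$, and since $m\in G\setminus N\subseteq\Gamma^{*}$ this forces $w\in\Gamma^{*}$, contradicting $w\in\Gamma'$. In the complementary case $M\setminus N\subseteq\iso(M)$, so $M=\iso(M)\cup N$; by the $\ff$-semiregularity of $G$, $\iso(M)$ is a subgroup of $M$, and since no group is the union of two proper subgroups while $N<M$, I get $\iso(M)=M$. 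Then every 2-generated subgroup of $M$ lies in $\ff$, i.e.\ $M\in\ff_{2}$, so $\gen{w,u}\in\ff$, contradicting the choice of $w,u$.

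I expect the main obstacle to be the second case: one needs the realization that whenever every element of $M\setminus N$ is isolated in $\gf(M)$, the subsets $\iso(M)$ and $N$ jointly cover $M$, and then the combination of $\ff$-semiregularity with the classical fact that a group is never the union of two proper subgroups collapses $\iso(M)$ onto the whole of $M$, turning $M$ into a member of $\ff_{2}$. The other case is a clean application of the hypothesis that $G\setminus N$ lies in a single component of $\gf(G)$, which is precisely what allows the transport of the edge $\{w,u\}$ from $\gf(G)$ into $\gf(M)$ for an arbitrary intermediate $M$.
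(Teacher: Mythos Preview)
Your argument is correct and follows essentially the same route as the paper's proof. Both proofs pass to an intermediate subgroup $M$ (the paper takes $M$ maximal, you allow any $N<M<G$), use the connectivity of $\gf(M)$ to link a hypothetical bad component to $\Gamma^{*}$, and in the remaining case obtain $M=N\cup\iso(M)$ and conclude via the ``no group is the union of two proper subgroups'' observation that $\iso(M)=M$, contradicting the edge $\{w,u\}$. The only cosmetic difference is that the paper first disposes of the case $N\in\ff_2$ separately, whereas your choice of the edge $\{w,u\}\subseteq N$ with $\gen{w,u}\notin\ff$ already rules that case out at the outset.
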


\begin{proof}
	Let $\Omega$ be the connected component of $\gf(G)$ containing $G\setminus N.$

	If $N\in\ff_2$, then every element of $N\setminus \iso(G)$ must be adjacent to an element of $G\setminus N,$ so $N\setminus \iso(G) \subseteq \Omega$. But this implies $\Omega=G\setminus \iso(G)$, and consequently $\gf(G)$ is connected.
	
	If  $N \notin\ff_2$, then $\iso(N)<N.$ Let $H$ be a maximal subgroup of $G$ containing $N$. Since
	$\gf(H)$ is connected, there exists a unique connected component of
	$\gf(G),$ say $\Delta,$ containing $H\setminus \iso(H).$ Of course $N\setminus \iso(N) \subseteq H\setminus \iso(H),$ so $N\setminus \iso(N)\subseteq \Delta.$ Recall that $G\setminus N\subseteq \Omega.$ Moreover if $x\in \iso(N) \setminus \iso(G),$ then $\langle x, y \rangle \notin \ff$ for some $y\in G\setminus N,$ so $\iso(N) \setminus \iso(G)\subseteq \Omega$. If $\Delta\cap \Omega\neq \emptyset,$ then $\Delta=\Omega=G\setminus \iso(G)$ and $\gf(G)$ is connected. If $\Delta\cap \Omega=\emptyset,$ then
	$(H\setminus \iso(H))\cap (H\setminus N)=\emptyset$, i.e. $H=N\cup \iso(H).$ Since $H\notin \fff$, $\iso(H)\neq H,$ and consequently $H=N.$ 
\end{proof}

\begin{lemma}
	Let $\ff$ be a class of finite groups with the following properties:
	\begin{enumerate}
		\item All the groups in $\ff$ are soluble.
		\item $\ff$ is closed under taking subgroups.
	\end{enumerate}
	Suppose that $G$ is  $\ff$-semiregular
	 and $\gf(H)$ is connected for any proper subgroup 
	$H$ of $G.$ 
	If $G$ is not soluble, then $\gf(G)$ is connected.
\end{lemma}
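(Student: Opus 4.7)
\emph{Strategy.} The plan is to apply Lemma~\ref{normale} with $N$ the soluble radical $R$ of $G$. Because $G$ is non-soluble, $R<G$, and what I must verify is that every $g\in G\setminus R$ is a non-isolated vertex of $\gf(G)$ and that $G\setminus R$ is contained in a single connected component of $\gf(G)$. Lemma~\ref{normale} will then force $\gf(G)$ to be connected, except in the case where $R$ happens to be a maximal subgroup of $G$; that case I would handle by a short direct argument.

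\emph{Locating the isolated vertices.} By $\ff$-semiregularity $\iso(G)$ is a subgroup of $G$, and by Lemma~\ref{FFF}(c) it is $\aut(G)$-invariant, hence normal. Moreover, for any $x,y\in\iso(G)$ the subgroup $\langle x,y\rangle$ lies in $\ff$, so $\iso(G)\in\ff_2$. Since $\ff$ consists only of soluble groups, Thompson's theorem (a finite group is soluble iff each of its $2$-generated subgroups is) forces $\ff_2$ to consist of soluble groups, so $\iso(G)$ is a soluble normal subgroup and $\iso(G)\leq R$. In particular every $g\in G\setminus R$ is non-isolated in $\gf(G)$.

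\emph{Connecting $G\setminus R$.} The key observation is that whenever $g_1,g_2\in G$ have images in $G/R$ generating a non-soluble subgroup, $\langle g_1,g_2\rangle$ surjects onto this subgroup and is therefore itself non-soluble, so $\langle g_1,g_2\rangle\notin\ff$ and $g_1\sim g_2$ in $\gf(G)$. Since $G/R$ has trivial soluble radical, the non-soluble graph of $G/R$ on its non-identity elements (edges between pairs generating a non-soluble subgroup) is connected; this is the structural input I would import from the literature on soluble/non-soluble graphs of groups with trivial soluble radical. Given $g_1,g_2\in G\setminus R$, I take a path in that graph between their images, lift it coset-by-coset while keeping $g_1,g_2$ as the chosen endpoints, and obtain a corresponding path in $\gf(G)$ from $g_1$ to $g_2$.

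\emph{Conclusion and main obstacle.} Lemma~\ref{normale} applied with $N=R$ now gives either $\gf(G)$ connected or $R$ a maximal subgroup of $G$. In the latter case $G/R$ is simple and, being non-soluble, non-abelian simple. For any $g\in G$ and any $h\in G\setminus R$, the maximality of $R$ gives $R\langle h\rangle=G$, so $\langle g,h\rangle R=G$ and $\langle g,h\rangle$ surjects onto the non-abelian simple group $G/R$; hence $\langle g,h\rangle$ is non-soluble, $g\sim h$ in $\gf(G)$, and every vertex of $G$ is adjacent to every element of $G\setminus R$, so $\gf(G)$ is connected (of diameter at most~$2$). The principal obstacle in the plan is the second step: the connectedness of the non-soluble graph of a group with trivial soluble radical is a non-trivial structural fact that must be invoked, whereas everything else is bookkeeping with the lemmas already developed in the paper.
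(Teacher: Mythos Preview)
Your approach is essentially the paper's: take $N=R(G)$, use the connectedness of the non-solubility graph of $G/R$ (this is exactly the Guralnick--Kunyavski\u{\i}--Plotkin--Shalev result the paper cites as \cite[Theorem~6.4]{gu}), lift to conclude that $G\setminus R$ lies in a single connected component of $\gf(G)$, and invoke Lemma~\ref{normale}. Your preliminary step showing $\iso(G)\leq R$ via Thompson's theorem is correct but redundant: once you know every nontrivial element of $G/R$ is non-isolated in $\Gamma_{\mathfrak S}(G/R)$, the lift already shows every element of $G\setminus R$ is non-isolated in $\gf(G)$.

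The one place you go astray is the ``$R$ maximal'' case. You assert that then $G/R$ is non-abelian simple and proceed to argue connectivity directly. But $R$ is \emph{normal}, so ``$R$ maximal'' means $G/R$ has no proper nontrivial subgroups whatsoever, forcing $G/R\cong C_p$; then $G$ would be soluble, contradicting the hypothesis. Thus this case is simply impossible, which is exactly what the paper says in one line (``since $R$ cannot be a maximal subgroup of $G$''). Your argument in that paragraph is therefore vacuous, and the premise ``$G/R$ non-abelian simple'' that you use is inconsistent with $R$ being a maximal subgroup. This does not break the proof---a vacuous case does no harm---but you should replace that paragraph with the one-line observation that the case cannot occur.
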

\begin{proof}
	Let $R=\R(G)$ be the soluble radical of $G$ and let $\mathfrak S$ be the class of the finite soluble groups.	Assume that $G\neq \R(G).$ 
	By \cite[Theorem 6.4]{gu}, the only isolated vertex of $\tilde \Gamma_{\mathfrak S}(G/R)$ is the identity element and the graph 	
	$\Gamma_{\mathfrak S}(G/R)$ is connected. By Lemma \ref{FFF}, all the elements of $G\setminus R$ belong to the same connected component of  $\Gamma_{\mathfrak S}(G)$. Since $\ff \subseteq \mathfrak S,$ there exists a connected component, say $\Omega$, of $\gf(G)$ with
	$G\setminus R \subseteq \Omega.$	
	Since $R$ cannot be a maximal subgroup of $G$, it follows from  Lemma \ref{normale}  that $\gf(G)$ is connected.
\end{proof}

\begin{lemma}\label{duegen}
Let $\ff$ be a class of finite groups with the following properties:
\begin{enumerate}
	\item All the groups in $\ff$ are soluble.
	\item $\ff$ is closed under taking subgroups.
\end{enumerate}
Suppose that $G$ is  $\ff$-semiregular 
and $\gf(H)$ is connected for any proper subgroup 
$H$ of $G.$ 
If $G$ is a 2-generated  group, then $\gf(G)$ is connected.
\end{lemma}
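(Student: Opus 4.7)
The plan begins with two reductions. By the previous lemma, if $G$ is not soluble then $\gf(G)$ is already connected, so I may assume $G$ is soluble. If $G\in\ff$, then since $\ff$ is closed under taking subgroups and $G$ is $2$-generated we also have $G\in\fff$; every pair $\{g,h\}$ of elements of $G$ satisfies $\gen{g,h}\in\ff$, so $\gf(G)$ has no edges and the statement holds vacuously. Henceforth assume $G\notin\ff$.

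Because $G\notin\ff$, every generating pair $(g_1,g_2)$ of $G$ satisfies $\gen{g_1,g_2}=G\notin\ff$ and is thus an edge of $\gf(G)$; in particular, the generating graph of $G$ embeds as a subgraph of $\gf(G)$. I would invoke the Crestani--Lucchini theorem on the generating graph of $2$-generated finite soluble groups, which says that the induced subgraph on the set $\Omega(G)$ of non-isolated vertices of the generating graph is connected. Consequently $\Omega(G)$ lies in a single connected component $\Gamma$ of $\gf(G)$, and the objective reduces to showing that every non-isolated vertex of $\gf(G)$ lies in $\Gamma$.

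Fix $x\in G\setminus\iso(G)$ and a witness $y$ with $\gen{x,y}\notin\ff$. If $\gen{x,y}=G$ then $x\in\Omega(G)\subseteq\Gamma$; otherwise $\gen{x,y}$ is contained in a proper maximal subgroup $M\notin\ff$, and $\gf(M)$ is connected by hypothesis, so $x$ lies in the unique component $\Gamma_M$ of $\gf(G)$ containing $M\setminus\iso(M)$. The core of the argument is to prove $\Gamma_M=\Gamma$. I would fix a generating pair $(a,b)$ of $G$ with $a\in M$ --- its existence follows from Proposition \ref{gaz} applied to $G/M_G$ (a $2$-generated primitive soluble quotient) lifting an appropriately positioned pair of generators --- together with a suitable minimal normal subgroup $N$ of $G$. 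By Lemma \ref{unodue}, either $aN\subseteq\Omega(G)$ or $bN\subseteq\Omega(G)$. A case analysis combining Lemma \ref{duemax} with the $\ff$-semi-regularity of $G$ (which forces $\iso(M)$ to be a \emph{proper} subgroup of $M$) then produces an element $m\in(M\setminus\iso(M))\cap\Omega(G)$, witnessing $\Gamma_M\cap\Gamma\neq\emptyset$ and hence $\Gamma_M=\Gamma$.

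The principal obstacle is the construction of the bridging element $m$: membership in $\Omega(G)$ is a global condition (it requires a partner outside $M$ generating the whole group), while non-isolation in $\gf(M)$ is a local condition (it requires an internal partner inside $M$). Reconciling these two demands is the technical heart of the argument and draws on all the preceding structural lemmas: Proposition \ref{gaz} to position generators in designated cosets, Lemma \ref{unodue} to place a whole coset inside $\Omega(G)$, the $\ff$-semi-regularity hypothesis to forbid a coset from lying entirely inside $\iso(M)$, and Lemma \ref{duemax} to dispose of any residual exceptional configurations.
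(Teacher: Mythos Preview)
Your framework matches the paper's: reduce to $G$ soluble with $G\notin\fff$, use Crestani--Lucchini to place $\Omega(G)$ inside one component $\Gamma$, and for a hypothetical $g\in\Omega_\ff(G)\setminus\Gamma$ sitting in some maximal $M$, derive a contradiction from $\Omega_\ff(M)\cap\Omega(G)=\emptyset$.

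The gap is in the construction of the bridging element, which you correctly flag as the heart but do not actually carry out. Arranging a single generator $a\in M$ and a single minimal normal $N$ is not enough: if Lemma~\ref{unodue} returns $bN\subseteq\Omega(G)$ you learn nothing about $M$, and even if it returns $aN$, the whole of $aN\cap M$ may well lie in $\iso(M)$---semiregularity only says $\iso(M)$ is a subgroup, it does not forbid cosets from lying inside it. Lemma~\ref{duemax} plays no role in the paper's argument here. What is actually needed is a split on whether $M\nn G$. If $M$ is normal, then $\iso(M)\nn G$, and Gasch\"utz together with the non-cyclicity of $M/\iso(M)$ (Lemma~\ref{Icyc}) immediately produces $m_2\in(M\setminus\iso(M))\cap\Omega(G)$. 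If $M$ is not normal, the paper needs \cite[Theorem~7]{dlt} (not just Gasch\"utz) to write $G=\langle x,y^a\rangle$ with \emph{both} $x,y\in M$ and $a$ in the abelian socle modulo $M_G$; then it runs an induction along an entire $G$-chief series $1=U_0\le\cdots\le U_t=M_G$, applying Lemma~\ref{unodue} at every step, to force $M_G\le\iso(M)$. Combined with $x\in\iso(M)$ this gives $M=\iso(M)\langle y\rangle\in\fff$, the desired contradiction. The two-in-$M$ generating configuration and the chief-series induction are the missing ingredients your sketch does not supply.
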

\begin{proof}By the previous lemma we may assume that $G$ is soluble. Moreover we may assume that if $g \in G,$ then $\langle g \rangle \in \ff$ (otherwise $g$ would be a universal vertex of $\gf(G)$ and consequently $\gf(G)$ would be obviously connected).
Finally, we  may assume $G\notin \ff_2$ (otherwise $\gf(G)$ is the empty graph). This implies that the subgraph $\Delta(G)$ of the generating graph of $G$ induced by its non-isolated vertices is a subgraph of $\gf(G).$ By \cite[Theorem 1]{cl}, the graph $\Delta(G)$ is connected, so there exists a connected component, say $\Gamma$, of $\gf(G)$ containing the set $\Omega(G)$ of the
	vertices of $\Delta(G).$ From now on, for any subgroup $H$ of $G,$ we will denote by $\Omega_\ff(H)$ the set $H\setminus \iso(H)$
	of the vertices of $\gf(H).$
	Since $\gf(G)$ is disconnected, we must have $\Omega_\ff(G)
	\neq \Gamma.$
	
	Let $g \in \Omega_\ff(G) \setminus \Gamma.$ Then there exists $\tilde g \in G$ such that $\langle g, \tilde g \rangle \notin \ff.$ Since $g\notin \Omega(G)$, $\langle g, \tilde g \rangle \neq G$ and therefore there exists a maximal subgroup $M$ of $G$ with $g\in \Omega_\ff(M).$ 
	We have
	\begin{equation}\label{inte}
	\Omega_\ff(M) \cap \Omega(G)=\emptyset.
	\end{equation}
	Indeed assume $m \in \Omega_\ff(M) \cap \Omega(G)$. Since  $m, g\in \Omega_\ff(M),$ then $g$ and $m$ would be contained in the same connected component of $\gf(M),$  and consequently in the same connected component of $\gf(G).$ However, since $m \in \Omega,$ this connected component would coincide with $\Gamma,$ in contradiction with $g \notin \Gamma.$

	We distinguish two cases:
	
	\noindent a)  $M$ is a normal subgroup of $G$. In this case
	$\iso(M)$ is a characteristic subgroup of $M$ and therefore it is normal in $G.$ Moreover $G=\langle y, M\rangle$ for some $y\in G.$ Since $G$ is 2-generated, by Proposition \ref{gaz}, there exist $m_1, m_2 \in M$ such that $G=\langle ym_1, m_2\rangle.$  Since $\Omega_\ff(M)\neq \emptyset$, we must have $M \notin \fff$ and therefore, by Lemma \ref{Icyc}, $M/\iso(M)$ is not cyclic. This implies $m_2\notin \iso(M).$ Hence $m_2 \in \Omega_\ff(M) \cap \Omega(G),$ against (\ref{inte}).
	
	\noindent b) $M$ is not a normal subgroup of $G$.	Let $M_G$ be the normal core of $M$ in $G$. We have $G/M_G\cong A/M_G \rtimes M/M_G,$ where $A/M_G$ is a faithful irreducible $(M/M_G)$-module. By \cite[Theorem 7]{dlt} there exist $k_1, k_2 \in M$ and $a \in A$ such that
	$G=\langle k_1, k_2^a\rangle M_G.$ By Proposition 
	\ref{gaz}, there exist $m_1, m_2 \in M_G$ 
	such that $G=\langle k_1m_1, k_2^am_2\rangle.$ Hence $$G=\langle x, y^a \rangle \text{ with }x=k_1m_1, y=k_2m_2^{a^{-1}}
	\in M \text { and } a \in A.$$ Now we claim that $M_G$ is contained in $\iso(M)$. Consider a normal series
	$$1=U_0\leq U_1 \leq \dots \leq U_t=M_G$$
	where, for $1\leq i \leq t,$ $U_i/U_{i-1}$ is a chief factor of $G.$ We prove by induction on $i$ that $U_i \leq \iso(M).$
	Since $\ff$ contains all  cyclic subgroups of $G$, $U_0=1 \leq \iso(M)$. Let $i<t$ and assume $U_i\leq \iso(M)$. It follows from Lemma \ref{unodue} that there are two possibilities:
	either  $xuU_i \in \Omega(G/U_i)$ for any $u \in U_{i+1}$ or $y^auU_i \in \Omega(G/U_i)$ for any $u \in U_{i+1}.$ In the first case, by Proposition \ref{gaz}, for any $u\in U_{i+1}$ there exists $\bar u \in U_i$ such that $xu\bar u \in \Omega(G).$ Hence, by (\ref{inte}), $xu\bar u \in \iso(M).$ So
	$\langle xu \mid u \in U_{i+1} \rangle \subseteq \iso(M)$ and consequently $U_{i+1} \subseteq \iso(M).$
	The same argument can be applied in the second case.
	So we have proved $M_G \leq \iso(M).$ Since $x \in \Omega(G),$ it follows from \ref{inte}
	that $x \in \iso(M).$ But then, since $M_G \leq \iso(M),$ 
	$M=\langle x, y, M_G\rangle = \iso(M)\langle y \rangle,$ hence $M \in \fff$ by Lemma \ref{Icyc}, in contradiction with $g \in \Omega_\ff(M).$
\end{proof}

\begin{lemma}\label{fpf}
	Let $H$ be a 2-generated finite group and $V$ a faithful irreducible $H$-module. Consider the semidirect product $X=V^u \rtimes H,$ with $u=\dim_{\kndo_H(V)}(V).$ 
	Suppose $H=\langle h_1, h_2\rangle.$ Then there exists $w \in V^u \rtimes H$ such that $X=\langle h_1, h_2w\rangle$ if and only
	 if $h_1$ acts fixed-point-freely on $V$.
	\end{lemma}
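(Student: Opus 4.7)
The plan is to convert the generation question $X = \langle h_1, h_2 w\rangle$ into a representation-theoretic condition on the action of $h_1$ on $V$. As a first step, writing $w = v h_0$ with $v \in V^u$ and $h_0 \in H$, the projection of $\langle h_1, h_2 w\rangle$ to $H = X/V^u$ is $\langle h_1, h_2 h_0\rangle$, which must equal $H$; so, after replacing $h_2$ by $h_2 h_0$ (still a generator of $H$ together with $h_1$, and in particular $h_0 = 1$ is always admissible) we may assume $w \in V^u$. Set $L = \langle h_1, h_2 w\rangle$; then $L \cap V^u$ is an $H$-submodule of $V^u$, and $L = X$ if and only if $L \cap V^u = V^u$.

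The core computation identifies $L \cap V^u$ explicitly via Fox calculus applied to the presentation $F(x_1, x_2) \twoheadrightarrow H$, $x_i \mapsto h_i$: evaluating a relation $r$ of $H$ at $(h_1, h_2 w)$ yields an element of $V^u$ equal to $(\partial r / \partial x_2)|_{h_1, h_2} \cdot h_2(w)$, because $h_1$ is adjoined unmodified and the $\partial/\partial x_1$ contribution drops out. This gives $L \cap V^u = J \cdot h_2(w)$, where $J \subseteq kH$ is the left ideal $\{b \in kH : \exists a \in kH,\ a(h_1 - 1) + b(h_2 - 1) = 0\}$. Since $V$ is faithful irreducible with $\kndo_{kH}(V) = D$, the $V$-isotypic block of $kH$ is $kHe \cong M_u(D) \cong \kndo_D(V)$, and $V^u \cong kHe$ as $kH$-module, so $V^u$ is cyclic as a $kH$-module. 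It follows that the equation $L \cap V^u = V^u$ admits a solution $w$ if and only if the image $\bar J$ of $J$ under the block projection $kH \twoheadrightarrow kHe$ is all of $\kndo_D(V)$.

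The condition $\bar J = \kndo_D(V)$ unpacks, via the defining relation of $J$, as the existence of $\bar a \in \kndo_D(V)$ with $\bar a(h_1 - 1) = 1 - h_2$ on $V$; equivalently, $1 - h_2 \in \kndo_D(V) \cdot (h_1 - 1)$. The latter left ideal consists of those $D$-endomorphisms vanishing on $C_V(h_1) = \ker(h_1 - 1)$, so the condition simplifies to $(1 - h_2)|_{C_V(h_1)} = 0$, i.e.\ $C_V(h_1) \subseteq C_V(h_2)$. Since $V$ is a faithful irreducible $H$-module and $H = \langle h_1, h_2\rangle$, we have $C_V(h_1) \cap C_V(h_2) = C_V(H) = 0$, so the inclusion collapses to $C_V(h_1) = 0$, i.e.\ $h_1$ acts fixed-point-freely on $V$.

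The principal obstacle is the Fox-calculus identification $L \cap V^u = J \cdot h_2(w)$, standard material but requiring careful bookkeeping. An equivalent reformulation avoiding explicit Fox derivatives passes through maximal $H$-submodules of $V^u$: $L = X$ iff $L$ surjects onto each quotient $X/M = V \rtimes H$ for every maximal $H$-submodule $M \subset V^u$, which asks $\langle h_1, h_2 \bar w\rangle = V \rtimes H$ for the image $\bar w$ of $w$ in $V^u/M \cong V$; varying $M$ sweeps $\bar w$ over all nonzero elements of $V$, and the accumulated conditions collapse again to $\bar J = \kndo_D(V)$. For the sufficient direction, one may alternatively invoke Gaschütz (Proposition \ref{gaz}) with $N = V^u$ and $k = 2$ (noting $d(X) = 2$ in our setting) to produce a pair $(h_1 v_1, h_2 v_2)$ generating $X$, and then use invertibility of $h_1 - 1$ on $V^u$ (equivalent to $h_1$ being fpf) to conjugate by a suitable $v \in V^u$ and kill $v_1$, reaching generators of the form $(h_1, h_2 w)$.
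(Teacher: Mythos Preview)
Your argument is correct and takes a genuinely different route from the paper's. The paper's proof is a three-line appeal to a determinant criterion from \cite{CL4}: identifying $H$ with a subgroup of $\gl(u,F)$ (where $F=\kndo_H(V)$), writing $h_i$ as matrices $X_i$, setting $A_i=I_u-X_i$, and encoding $w=(v_1,\ldots,v_u)\in V^u$ as the rows of a $u\times u$ matrix $B$, one has $\langle h_1,wh_2\rangle=X$ if and only if
\[
\det\begin{pmatrix}A_1&A_2\\0&B\end{pmatrix}\neq 0.
\]
Since this determinant equals $\det(A_1)\det(B)$, a suitable $B$ exists precisely when $A_1$ is invertible, i.e.\ when $h_1$ is fixed-point-free on $V$. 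Your approach instead computes $L\cap V^u$ directly via Fox derivatives, identifies it with $J\cdot w$ for an explicit left ideal $J\subseteq kH$, and reduces the question to whether the image $\bar J$ of $J$ in $\kndo_D(V)$ is the whole matrix ring. This is more self-contained (no external citation) and exposes the underlying module-theoretic mechanism, at the cost of considerably more bookkeeping. Your identification $J=\{b:\exists a,\ a(h_1-1)+b(h_2-1)=0\}$ is correct: it is the projection to the second coordinate of the kernel of $(kH)^2\to I_{kH}$, and this kernel coincides with the image of the relation module by right-exactness of $\otimes\,k$ applied to the standard resolution.

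Two remarks. First, writing ``$kHe$'' presupposes a central idempotent cutting out the $V$-block, which requires $V$ to be projective; nothing in the hypotheses guarantees this (only $\oo_p(H)=1$ follows from faithfulness). The fix is cosmetic: replace the block decomposition by the ring surjection $kH\twoheadrightarrow \kndo_D(V)$ given by Jacobson density, and note that $V^u\cong\kndo_D(V)$ as left $kH$-modules via this surjection, so $V^u$ is cyclic in any case. Second, the step ``$\bar J=\kndo_D(V)$ unpacks as $\exists\,\bar a$ with $\bar a(h_1-1)=1-h_2$ on $V$'' is immediate only in the direction $\Rightarrow$; the converse would require lifting a relation from $\kndo_D(V)$ back to $kH$, which is not automatic when $kH$ is not semisimple. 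This does not damage the proof, however: for the implication $\exists\,w\Rightarrow h_1$ fpf you only use the clear direction, and for the implication $h_1$ fpf $\Rightarrow\exists\,w$ you supply the independent Gasch\"utz-plus-conjugation argument in the final paragraph, which is clean and complete. (The assertion $d(X)=2$ used there is correct---it is exactly the statement, underlying \cite{dvl}, that $d(V^t\rtimes H)$ first exceeds $d(H)=2$ at $t=u+1$---but would benefit from a one-line pointer.)
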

\begin{proof}
Let $F:=\kndo_H(V).$
We may identify $H =
\langle h_1, h_2 \rangle$ with a subgroup of the general linear group $\gl(u,F)$. In this
identification, for $i=1,2$, $h_i$ becomes an $u\times u$ matrix $X_i$ with coefficients in $F$. Denote by $A_i$ the matrix
$I_u-X_i.$ Let $w=(v_{1},\dots,v_{u})\in V^u.$
Then every $v_{j}$ can be
viewed as a $1 \times u$ matrix. Denote  the $u \times u$ matrix
with rows $v_{1},\dots,v_{u}$  by $B$. By
\cite[Section 4]{CL4}, $\langle h_1, wh_2\rangle=X$ if and only if
\begin{equation}\label{matrici}
		\det \begin{pmatrix}A_1&A_2\\
			0&B\end{pmatrix} \neq 0.
\end{equation}
A matrix $B$ so that (\ref{matrici}) is satisfied can be found if and only if $A_1$ is invertible, i.e. if and only if 
$h_1$ acts fixed-point-freely on $V$.
\end{proof}

\begin{thm}\label{main}
Let $\ff$ be a class of finite groups with the following properties:
\begin{enumerate}
	\item All the groups in $\ff$ are soluble.
	\item $\ff$ is closed under taking subgroups.
\end{enumerate}
Suppose that $G$ is a finite group which is minimal with respect to the following properties: $G$ is  $\ff$-semiregular 
and the graph $\gf(G)$ is not  connected.
Then $G$ is soluble and there exists an epimorphism 
$$\pi: G \to  (V_1 \times \dots \times V_t) \rtimes H,$$ where
$H$ is 2-generated and there exists a faithful irreducible $H$-module $V$ with $V_i \cong_H V$ for $1\leq i \leq t$ and $t=1+\dim_{\kndo_H(V)}(V).$ 

Moreover let $\mathcal W$ be the set of the $H$-submodules  of $V_1\times \dots \times V_t$ that are $H$-isomorphic to $V^{t-1}.$
There exists one and only one $W \in \mathcal W$ with the property that
 $M=\pi^{-1}(W \rtimes H) \notin \ff_2$.  If $g_1, g_2 \in G$ and $\langle g_1, g_2 \rangle \notin \ff$, then either $\langle g_1, g_2\rangle \leq M^x$, for some $x\in G,$ or $H$ is cyclic of prime order and $\langle g_1, g_2\rangle \leq \pi^{-1}(V_1 \times \dots \times V_{t}).$

\end{thm}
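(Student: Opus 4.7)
My plan is to extract the structure of a minimal counterexample $G$ in four stages, leveraging the preliminary lemmas together with crown-theoretic arguments. \emph{First,} the previous lemma gives $G$ soluble, and Lemma \ref{duegen} then forces $G$ not to be 2-generated. Consequently every pair $g,h \in G$ generates a proper subgroup, so every edge of $\gf(G)$ lies inside some maximal subgroup $M$ with $M\notin\ff_2$; by the minimality of $G$ the graph $\gf(M)$ is connected, so the non-isolated vertices of $\gf(M)$ belong to a single connected component $\Gamma_M$ of $\gf(G)$. The disconnectedness of $\gf(G)$ produces two such maximal subgroups $M$ and $L$ with $\Gamma_M\neq\Gamma_L$.

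\emph{Second,} I would apply Lemma \ref{duemax} systematically. Part (b) forces $M_G$ and $L_G$ to be comparable, so choosing $M$ with $M_G$ minimal among cores of maximal subgroups outside $\ff_2$ admitting a separated companion, part (a) excludes $L_G\leq M_G$ and part (c) forces $L_G/M_G$ to be the unique minimal normal subgroup of $\bar G:=G/M_G$. Thus all such $L$ share a common value $L_G=:N$, and setting $H:=G/N$ and $V:=N/M_G$, the quotient $\bar G$ is primitive monolithic with $\bar G\cong V\rtimes H$ and $V$ a faithful irreducible $H$-module. The 2-generation of $H$ follows from Lemma \ref{duemax}(d) via the observation that $L/L_G$ is a maximal subgroup of $H$: when $L$ is not normal, $L/L_G$ is cyclic, so $H$ possesses a cyclic maximal subgroup and hence $d(H)\leq 2$.

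\emph{Third,} the crucial stage is the construction of $\pi$ with $V$-homogeneous socle of multiplicity $t=1+\dim_{\kndo_H(V)}V$. I would let $K$ be the intersection of the cores of the $G$-conjugates of all separated maximal subgroups outside $\ff_2$, and consider $\pi:G\to G/K$; standard crown theory for the irreducible module $V$ gives $G/K\cong V^t\rtimes H$ for some multiplicity $t$. The equality $t=1+\dim_{\kndo_H(V)}V$ is forced by combining the non-2-generation of $G$ with Lemma \ref{fpf} and Proposition \ref{gaz}: if $t\leq \dim_{\kndo_H(V)}V$ then Lemma \ref{fpf} would furnish a 2-generating set of $V^t\rtimes H$, which Proposition \ref{gaz} would lift to a 2-generating set of $G$, contradicting the non-2-generation; conversely, if $t>1+\dim_{\kndo_H(V)}V$, then a strictly larger normal subgroup $K'\supsetneq K$ would give a proper quotient $G/K'$ inheriting all the counterexample properties, contradicting the minimality of $G$.

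\emph{Fourth,} the $H$-submodules $W\subset V^t$ isomorphic to $V^{t-1}$ parameterize the family of maximal subgroups $M_W:=\pi^{-1}(W\rtimes H)$ above the crown, and a module-theoretic analysis using Lemma \ref{fpf} singles out a unique $G$-orbit of such $W$'s for which $M_W\notin\ff_2$, yielding the distinguished $W$. Every edge of $\gf(G)$ therefore lies in a conjugate of this $M_W$, with the single exception that $\pi^{-1}(V^t)$ may itself fail to lie in $\ff_2$ — a direct check using the structure of $V^t\rtimes H$ shows this exception occurs precisely when $H\cong C_p$ for a prime $p$. The main obstacle is the crown-theoretic construction pinning the multiplicity down to exactly $t=1+\dim_{\kndo_H(V)}V$, together with the verification that 2-generation of $H$ persists through this construction; once these are established, the remaining uniqueness and dichotomy statements reduce to routine module-theoretic analysis via Lemma \ref{fpf} and Proposition \ref{gaz}.
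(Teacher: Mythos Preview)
Your stages 1 and 2 are broadly on the right track, but stage 3 has a genuine gap and stage 4 is too vague to constitute a proof. The paper obtains $\pi$ and the exact value $t=1+\dim_{\kndo_H(V)}V$ by an entirely different mechanism: since $d(G)\geq 3$, choose a normal subgroup $N$ maximal with $d(G/N)\geq 3$; then $d(G/N)=3$ while every proper quotient of $G/N$ is 2-generated, and the Dalla Volta--Lucchini theorem \cite{dvl} gives $G/N\cong V^t\rtimes H$ with $H$ 2-generated and the stated $t$ in one stroke. Your alternative route in stage 3 breaks in both directions. For the upper bound, Lemma \ref{fpf} does not assert that $V^u\rtimes H$ is 2-generated: it says $\langle h_1,h_2w\rangle=V^u\rtimes H$ for some $w$ \emph{if and only if} $h_1$ acts fixed-point-freely on $V$, and you have produced no such $h_1$; moreover, even granting a 2-generating set of the quotient, Proposition \ref{gaz} only lifts it to $G$ when $2\geq d(G)$, which is precisely what you are trying to contradict. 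For the lower bound, your minimality argument via a proper quotient $G/K'$ fails because the hypothesis is that $G$ is $\ff$-semiregular (a condition on subgroups), not strongly $\ff$-semiregular, so $G/K'$ need not be $\ff$-semiregular and hence need not be a smaller counterexample. Also, your definition of $K$ as an intersection of cores of ``separated'' maximal subgroups does not obviously recover the crown kernel; by your own stage 2 these cores take only the two values $M_G$ and $N=L_G$, so the intersection is $M_G$ and $G/K\cong V\rtimes H$ with $t=1$.

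Stage 4 also glosses over the two hardest parts of the argument. The uniqueness of $W$ is not a module computation via Lemma \ref{fpf}: in the paper one shows that if two subgroups $K,K^*\in\tilde{\mathcal M}$ have distinct cores, then repeated use of Lemma \ref{duemax}(b), together with a direct check that $\langle (K\cap L)^\pi,(K^*\cap L)^\pi\rangle=L^\pi$ for any maximal $L$ with $V^t\leq L^\pi$, forces \emph{every} vertex of $\gf(G)$ into $\Gamma_K$, contradicting disconnectedness. Finally, the dichotomy about $H$ cyclic of prime order is not about whether $\pi^{-1}(V^t)$ lies in $\ff_2$; one assumes $H$ is \emph{not} cyclic of prime order and, for an edge $\{y_1,y_2\}$ inside a maximal $J$ with $V^t\leq J^\pi$ and $\Gamma_J\neq\Gamma_{K^g}$, uses Lemma \ref{duemax}(c),(d) to force $J^\pi=V^tC$ with $C$ cyclic maximal in $H$, and then explicitly constructs (via Lemma \ref{fpf} and a suitable conjugate of a generator of $C$) an $M\in\mathcal M$ with $M_G\neq K_G$ containing $\{y_1,y_2\}$, contradicting the uniqueness just established.
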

\begin{proof}
For a finite group $X$, let $d(X)$ be the smallest cardinality of a generating set of $X$.
By the previous lemmas,  $G$ is soluble and $d(G)\geq 3.$ In particular $G$ contains a normal subgroup $N$ with the property that $d(G/N)= 3$ but $d(G/M)\leq 2$ for every normal subgroup $M$ of $G$ properly containing $N.$ By \cite{dvl}
$G/N \cong V^t \rtimes H$, where $V$ is a faithful irreducible $H$-module and $$t=1+\dim_{\kndo_H(V)}(V).$$ Consider
the epimorphism $\pi: G \to V^t\rtimes H$ and let $\mathcal M$ be the set of the maximal subgroups of $G$ containing $N$ and with the property that $M^\pi$ does not contain $V^t.$ If $M \in \mathcal M,$ then $M/N \cong V^{t-1}\rtimes H.$ Let $F:={\kndo_H(V)}.$ The multiplicity of $V$ in $FH/J(FH)$ (where $FH$ denote the group algebra and $J(FH)$ its Jacobson radical) is $t-1$ so it follows from
\cite[Lemma 1]{cgk} that the smallest cardinality of 
a subset of  $V^t$ generating $V^t$ as an $H$-module is
$\lceil t/(t-1)\rceil=2.$ In particular there is no $w \in W$ with the property that $\langle w \rangle_H=V^t,$ denoting with 
$\langle w \rangle_H$ the $H$-submodule of $V^t$ generated by $w$. If $g \in G,$ then
$g^\pi=wh$ for some $w \in V^t$ and $h \in H$, so $g^\pi \leq \langle w , h \rangle \leq \langle w \rangle_H H,$ and therefore there exists $M\in \mathcal M$ with $g\in M.$

Assume that there exists no
$M \in \mathcal M$ containing two adjacent vertices of $\gf(G).$ 
Let $T$ be the preimage of $H$ under $\pi$.
We must have $T > N$ (otherwise if $g_1, g_2$ are adjacent vertices, then $\langle g_1,g_2\rangle N$ is a proper subgroup of $G$ and it is contained in some $M \in \mathcal M).$ In particular
$T^G=G.$
Fix $l\in T$. For any $g \in G,$ we have $g^\pi=wh,$ with
$w \in V^t$ and $h \in H.$ Thus 
$\langle l, g\rangle^\pi=\langle l^\pi, wh\rangle \leq \langle w \rangle_HH.$ Since, $\langle w \rangle_H \neq V^t,$  there exists $M\in \mathcal M$ containing $\langle l, g\rangle.$ In particular $l$ and $g$ are not adjacent in $\gf(G)$ and therefore $l \in \iso(G).$
Thus, $T\leq \iso(G).$ But then $G=T^G=\iso(G),$ a contradiction.

So we may assume that there exists $K\in \mathcal M$ containing two adjacent vertices $g_1,g_2$ of $\gf(G).$ In particular $K \notin \fff.$
Let $\tilde{\mathcal M}=\{M\in \mathcal M \mid M \notin \ff_2\}.$ If $M_1, M_2 \in \tilde{\mathcal M}$ and $(M_1)_G \neq (M_2)_G$ then
if follows from Lemma \ref{duemax}, (b), that $\Gamma_{M_1}=\Gamma_{M_2}.$

  Suppose that there exists $K^* \in \tilde{\mathcal M},$ with $K^*_G\neq K_G$. By the previous paragraph $\Gamma_K=\Gamma_{K^*}.$ We claim that in this case $\Gamma_K$ contains all the vertices of $\gf(G),$ in contradiction with the assumption that $\gf(G)$ is disconnected. 
  Assume that $g$ is a  vertex  of $\gf(G)$.
There exists $\tilde g$ such that
 $X=\langle g, \tilde g \rangle \notin \ff.$ Let $L$ be a a maximal subgroup of $G$ containing $XN.$ If $L \in \mathcal M,$ then $g \in \Gamma_L =  \Gamma_K=\Gamma_{K^*}$ by Lemma \ref{duemax}, (b). 
If $L \notin \mathcal M,$ then,
 up to conjugacy, we may assume  $K^\pi=W_1H$, $(K^*)^\pi= W_2H$, $L^\pi=V^tX$, with $W_1\neq W_2$, $W_1\cong_H W_2 \cong_H V^{t-1}$ and $X \leq H.$
By Lemma \ref{duemax}, (b),
either $\Gamma_L=\Gamma_K=\Gamma_{K^*}$, or
$\langle K\cap L, K^*\cap L\rangle \leq \iso(L).$ In the latter case $\langle (K\cap L)^\pi, (K^*\cap L)^\pi\rangle=\langle
W_1X, W_2X\rangle=
V^tX=L^\pi,$ but then $\iso(L)=L,$ a contradiction.

So from now on, we will assume that all the maximal subgroups in $\tilde{\mathcal M}$ are conjugate to $K$. It is not restrictive to assume $K^\pi=(V_1\times \dots \times V_{t-1})H$. Suppose now that
$H$ is not cyclic of prime order and, by contradiction, that there exist $g_1,g_2 \in G$ such that $\langle g_1, g_2 \rangle \notin \ff$
and $\langle g_1, g_2 \rangle$ is contained in no conjugate of $K.$
There exists a maximal subgroup $J$ of $G$ with $\langle g_1,g_2\rangle \leq J$ and $J^\pi\geq  V_1\times \dots \times V_t.$ We claim that, for any $g\in G,$ $\Gamma_J=\Gamma_{K^g}.$ Suppose that this is false. By Lemma \ref{duemax}, (c) and (d), $J^\pi=V^t C$ where $C$ is a cyclic maximal subgroup of $H$
and $H=CS$ with $S=\soc H.$
Let $\mathcal U$ be the set of the $H$-submodules $U$ of
$V_1\times \dots \times V_t$ with $U\cong_H V$ and $U\not\leq 
V_1\times \dots \times V_{t-1}.$ If $\pi^{-1}(U)\leq \iso(J)$ for every $U \in \mathcal U,$ then $R:=\langle \pi^{-1}(U) \mid U \in \mathcal U\rangle = \pi^{-1}(V_1\times \dots \times V_t)\subseteq \iso(J).$ Since $J/R$ is cyclic, by Lemma \ref{Icyc}, we would have $J\in \ff_2.$ So there exist $U\in \mathcal U$ and $y_1\notin \iso(J)$ with $y_1^\pi=u \in U.$
There exists $y_2\in J$ with $
\langle y_1,y_2\rangle \notin \ff.$ Let $y_2^\pi=wc,$ with $w\in V^t$ and $c$ in $C.$ If $\langle c \rangle \neq C,$ then
let $L$ be a maximal  subgroup of $G$ containing $\pi^{-1}((V_1\times \dots \times V_t)S\langle c\rangle).$ Clearly $\Gamma_L=\Gamma_J$ since $\{y_1,y_2\} \subseteq \Gamma_L \cap \Gamma_J.$ However, by Lemma \ref{duemax} (c),
$\Gamma_L=\Gamma_{K^g}$. Hence $\Gamma_J=\Gamma_{K^g},$ independently on the choice of $g$. 
Thus $\langle c \rangle=C$. There exists a conjugate $\bar c$ of $c$ in $H$ such that $H=\langle c, \bar c \rangle.$ 
If $c$ acts fixed-point-freely on $V,$ then, up to conjugacy, we may assume $w\in U$, but then $\langle y_1,y_2\rangle \leq \pi^{-1}(UC)$ is contained in a maximal subgroup $M \in \mathcal M$, which is not conjugate to $K$ since $U\leq M_G$ but $U\not\leq K_G.$
Finally assume that $c$ does not act fixed-point-freely.
We have $w=w_1+w_2,$ with $w_1 \in U$ and
$w_2 \in  (V_1\times \dots \times V_{t-1}).$
Since $\bar c$, being a conjugate of $c$, does not act fixed-point-freely of $V,$ by Lemma \ref{fpf}, $\bar c$ is an isolated vertex in $(V_1\times \dots \times V_{t-1})H$, so $\langle  w_2c, \bar c\rangle$ is a proper subgroup of $(V_1\times \dots \times V_{t-1})H$ supplementing $V_1\times \dots \times V_{t-1}.$ Thus, there exists an $H$-submodule $W$
of $V_1\times \dots \times V_{t-1}$ such that $W \cong V^{t-2}$ and
$\langle  w_2c, \bar c\rangle \leq WH.$ But then $M=\pi^{-1}(UWH)\ \in \mathcal M,$ $M$ is not conjugate to $K$ and $\langle y_1,y_2\rangle \leq M.$
\end{proof}

\begin{defn}
	Let $\ff$ be a class of finite groups. We say that a finite group $G$ is strongly $\ff$-semiregular if $\iso(X/Y)\leq X/Y$ for every $X\leq G$ and $Y\nn X.$ 
\end{defn}

\begin{lemma}\label{strongly}
	Let $\ff$ be a class of finite groups with the following properties:
	\begin{enumerate}
		\item All the groups in $\ff$ are soluble.
		\item $\ff$ is closed under taking subgroups.
		\item $\ff$ is closed under taking epimorphic images.
	\end{enumerate}
	Suppose that $G$ is  strongly $\ff$-semiregular and $\gf(H)$ is connected for any proper subgroup and any proper quotient
	$H$ of $G.$ 
	If $\gf(G)$ is disconnected, then $G/M\in\ff_2$ for every $1\neq M\nn G$.
\end{lemma}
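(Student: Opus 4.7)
My plan is to argue by contradiction: assume $G/M \notin \ff_2$ for some $1\neq M \nn G$, and use the hypothesis that $\gf(G/M)$ is connected (since $G/M$ is a proper quotient) to force a contradiction via Lemma \ref{normale} and Lemma \ref{Icyc}. The key device is the preimage of $\iso(G/M)$: it will provide a proper normal subgroup whose complement lies in a single component of $\gf(G)$, triggering Lemma \ref{normale}.

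First, since $G$ is strongly $\ff$-semiregular, $I := \iso(G/M)$ is a subgroup of $G/M$, and by Lemma \ref{FFF}(c) it is characteristic, hence normal. The assumption $G/M\notin\ff_2$ means $I\lneq G/M$, so, writing $\pi: G\to G/M$ for the projection, $L:=\pi^{-1}(I)$ is a proper normal subgroup of $G$ that contains $M$.

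Next, I would show that $G\setminus L$ is contained in a single connected component of $\gf(G)$. If $g\in G\setminus L$ then $\pi(g)\notin I=\iso(G/M)$, so some vertex is adjacent to $\pi(g)$ in $\gf(G/M)$; by Lemma \ref{FFF}(a) this lifts to an adjacency in $\gf(G)$, so $g$ is non-isolated. Moreover, given $g_1,g_2\in G\setminus L$, the connectedness of $\gf(G/M)$ provides a path $\pi(g_1)=u_0M,u_1M,\dots,u_kM=\pi(g_2)$ between them; choosing representatives $v_0=g_1$, $v_k=g_2$, and anything in $u_iM$ for $0<i<k$, Lemma \ref{FFF}(a) guarantees each consecutive pair is adjacent in $\gf(G)$, so $g_1,g_2$ lie in the same component. (This observation — that Lemma \ref{FFF}(a) permits the choice of arbitrary coset representatives — is the one small subtlety.)

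Now Lemma \ref{normale} applies, since $G$ is $\ff$-semiregular and $\gf(H)$ is connected for every proper subgroup $H$; because $\gf(G)$ is disconnected, we conclude that $L$ is a maximal subgroup of $G$. Consequently $L/M=I$ is maximal in $G/M$. Picking any $\bar g\in G/M\setminus I$, the normality and maximality of $I$ force $G/M=I\langle\bar g\rangle$, so Lemma \ref{Icyc} gives $G/M\in\fff=\ff_2$, contradicting our standing assumption. The main obstacle is really just the path-lifting step; once that is set up cleanly, everything else is a direct appeal to the lemmas already established.
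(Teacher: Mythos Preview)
Your proof is correct and follows essentially the same line as the paper's: both take the preimage of $\iso(G/M)$ as a proper normal subgroup, lift a path in $\gf(G/M)$ via Lemma~\ref{FFF}(a) to place $G\setminus L$ in a single component, invoke Lemma~\ref{normale} to force $L$ maximal, and then apply Lemma~\ref{Icyc} in $G/M$ to reach the contradiction $G/M\in\ff_2$.
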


\begin{proof} Suppose that there exists $1\neq M\nn G$ such that $G/M \notin \ff_2.$
	Let $I/M:=\iso(G/M)\nn G/M$. If $g_1M,g_2M\notin I/M$, then by the hypotheses, they  belong to the same connected component of the graph $\gf(G/M)$. Since, by Lemma \ref{FFF}, a path from $g_1M$ to $g_2M$ in $\gf(G/M)$ can be lifted to a path between $g_1$ and $g_2$ in $\gf(G)$, it follows that $G\setminus I$ is contained in a unique connected component of $\gf(G)$. By Lemma \ref{normale}, either $\gf(G)$ is connected or there exists $g\in G$ \st $\gen{g}\!I=G$. However, in the second case, $\gen{gM}\!I/M=\gen{gM}\!\iso(G/M)=G/M$ and from Lemma \ref{Icyc} it would follow  $G/M\in\fff,$ against the assumption.
\end{proof}

\begin{lemma}\label{ordini}	Let $\ff$ be a class of finite groups with the following properties:
	\begin{enumerate}
	\item All the groups in $\ff$ are soluble.
\item $\ff$ is closed under taking subgroups.
\item $\iso(G)$ is a subgroup of $G$ for every finite group $G.$
\item If $G_1\in \ff_2$ and $|G_2|=|G_1|,$ then $G_2\in \ff_2.$
\end{enumerate}
Then $\gf(G)$ is connected for every finite group $G.$
\end{lemma}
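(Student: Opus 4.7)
My plan is a proof by contradiction, the key idea being that hypothesis~(4) prevents the ``unique $W$'' produced by Theorem~\ref{main} from actually being unique.

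First I would take a finite group $G$ of minimal order with $\gf(G)$ disconnected. Hypothesis~(3) guarantees $\iso(H)\leq H$ for every $H\leq G$, so $G$ is $\ff$-semiregular; minimality ensures $\gf(H)$ is connected for every proper subgroup $H$ of $G$. Theorem~\ref{main} therefore applies and produces an epimorphism $\pi\colon G\to V^t\rtimes H$, with $H$ two-generated, $V$ a faithful irreducible $H$-module, $t=1+\dim_F V$ where $F:=\kndo_H(V)$, and a \emph{unique} $W\in\mathcal W$ such that $M=\pi^{-1}(W\rtimes H)\notin\ff_2$.

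Next I would count $\mathcal W$ and observe that all the candidates $\pi^{-1}(\overline W\rtimes H)$ with $\overline W\in\mathcal W$ have the same order. Since $V$ is absolutely irreducible over $F$ (by Schur, as $F=\kndo_H V$), every $H$-submodule of $V^t$ has the form $V\otimes_F U$ for a unique $F$-subspace $U\leq F^t$. In particular $\mathcal W$ corresponds bijectively to the set of hyperplanes of $F^t$, so
$$|\mathcal W|\;=\;\frac{|F|^t-1}{|F|-1}\;\geq\;|F|+1\;\geq\;3,$$
using $t\geq 2$ and $|F|\geq 2$; moreover every $\overline W\rtimes H$ has the common order $|V|^{t-1}|H|$, so every $\pi^{-1}(\overline W\rtimes H)$ has the common order $|V|^{t-1}|H|\cdot|\ker\pi|$.

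Finally I would invoke hypothesis~(4), which in its contrapositive form says that membership of a finite group in $\ff_2$ depends only on its order. Applied to $M\notin\ff_2$, it forces $\pi^{-1}(\overline W\rtimes H)\notin\ff_2$ for every $\overline W\in\mathcal W$; since $|\mathcal W|\geq 3>1$ this contradicts the uniqueness asserted by Theorem~\ref{main}. I foresee no substantive obstacle: beyond Theorem~\ref{main} and hypothesis~(4) the only real ingredient is the classification of $H$-submodules of $V^t$ via absolute irreducibility, yielding the standard Gaussian binomial count above.
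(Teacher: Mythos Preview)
Your proposal is correct and follows essentially the same route as the paper's own proof: take a minimal counterexample, invoke Theorem~\ref{main}, and observe that hypothesis~(4) is violated because all the $\pi^{-1}(\overline W\rtimes H)$ have the same order while exactly one of them is supposed to lie outside $\ff_2$. Your version is in fact slightly more careful than the paper's, since you explicitly verify $|\mathcal W|>1$ via the hyperplane count, whereas the paper tacitly assumes the existence of some $W^*\neq W$.
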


\begin{proof}
	Let $G$ be a counterexample of minimal order. 
According to the statement of Theorem \ref{main},	
		there exists an epimorphism 
	$\pi: G \to  (V_1 \times \dots \times V_t) \rtimes H,$ where
	$H$ is 2-generated and there exists a faithful irreducible $H$-module $V$ with $V_i \cong_H V$ for $1\leq i \leq t$ and $t=1+\dim_{\kndo_H(V)}(V).$ 	Moreover there exists  $W\in \mathcal W$ with $\pi^{-1}(W \rtimes H)\notin \ff_2$, while
	$\pi^{-1}(W^* \rtimes H)\in \ff_2$ whenever $W\neq W^* \in \mathcal W.$ Since $|\pi^{-1}(W \rtimes H)|=|\pi^{-1}(W^* \rtimes H)|$, this contradicts (4).
\end{proof}

\section{Applications}
\begin{prop}
Let $\mathfrak C$ be the class of the finite cyclic groups and let $G$ be a finite group. Denote by $\tilde{\pi}(G)$ be the set of the prime divisors $p$ of $|G|$ with the property that a Sylow $p$-subgroup of $G$ is either cyclic or a generalized quaternion group. Then $\isoc(G) = \prod_{p\in\tilde\pi(G)}\oo_p(Z(G)).$
\end{prop}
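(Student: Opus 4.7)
My plan is to show $\isoc(G)=\prod_{p\in\tilde\pi(G)}\oo_p(\Z(G))$ by reducing to a prime-by-prime analysis, exploiting that $g\in\isoc(G)$ precisely when $\gen{g,h}$ is cyclic for every $h\in G$. An immediate consequence is $\isoc(G)\subseteq\Z(G)$, since any such $h$ must commute with $g$, so I may work inside the abelian group $\Z(G)$ and use the primary decomposition $g=\prod_p g_p$. The problem then decouples prime by prime: $g$ is isolated if and only if, for each prime $p$ with $g_p\neq 1$, the subgroup $\gen{g_p,h}$ is cyclic for every $p$-element $h\in G$ (the contribution of primes not dividing $|g|$ is automatic, since a coprime-order product of cyclic subgroups of an abelian group is cyclic).

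For the forward inclusion, suppose $g\in\isoc(G)$ and fix a prime $p$ with $g_p\neq 1$; I would pick any element $h$ of order $p$ in a Sylow $p$-subgroup $P$ of $G$ and use that the cyclic abelian $p$-group $\gen{g_p,h}$ has totally ordered subgroup lattice, which forces $h\in\gen{g_p}$. Consequently every element of order $p$ in $P$ lies in the unique subgroup of order $p$ of the cyclic group $\gen{g_p}$, so $P$ itself has a unique subgroup of order $p$, and by the classical classification $P$ must be cyclic (for any $p$) or generalised quaternion (for $p=2$). In either case $p\in\tilde\pi(G)$.

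Conversely, given $g\in\prod_{p\in\tilde\pi(G)}\oo_p(\Z(G))$ and any $h\in G$, the group $\gen{g,h}$ is abelian with Sylow $q$-subgroup $\gen{g_q,h_q}$. If $q\nmid|g|$ this is just $\gen{h_q}$ and is cyclic; if $q\mid|g|$ then $g_q\in\oo_q(\Z(G))\leq\oo_q(G)$ lies in every Sylow $q$-subgroup of $G$, so $\gen{g_q,h_q}$ is an abelian subgroup of a cyclic or generalised quaternion group and is therefore cyclic (a generalised quaternion group has a unique involution, so any abelian subgroup has a unique element of order $2$, forcing cyclicity). Hence every Sylow of $\gen{g,h}$ is cyclic, so $\gen{g,h}$ is cyclic. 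The only nontrivial ingredient is the classical structural theorem classifying finite $p$-groups with a unique subgroup of order $p$, which I would invoke as a standard fact; everything else is routine bookkeeping with primary decompositions.
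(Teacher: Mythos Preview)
Your proof is correct and follows essentially the same approach as the paper's: both first observe $\isoc(G)\subseteq\Z(G)$, then for the forward inclusion pick an element of order $p$ in a Sylow $p$-subgroup and use cyclicity of $\gen{g,h}$ to conclude the Sylow has a unique subgroup of order $p$, while for the reverse inclusion both show $\gen{g,h}$ is abelian with every Sylow subgroup cyclic. The only cosmetic difference is that the paper works with the order-$p$ power $\tilde x=x^{n/p}$ where you use the full $p$-part $g_p$, and you frame the reduction to primes a bit more explicitly; the underlying argument is the same.
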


\begin{proof}
Suppose  $x \in \isoc(G).$ Then for any $y \in G,$ $\langle x, y \rangle$ is cyclic, so in particular $[x,y]=1.$ Hence
$x \in Z(G).$ Let $|x|=n,$ $p$ a prime divisor of $n$ and $\tilde x = x^{n/p}.$ Choose $P \in \syl_p(G)$ and let $y$ be an element of $P$ of order $p$. Then $\langle \tilde x, y \rangle$ is contained in 
$\langle x, y \rangle$, hence it is cyclic and therefore $\langle  \tilde x \rangle = \langle y \rangle.$ This means that
$\langle \tilde x \rangle$ is the unique minimal subgroup of $P,$ and therefore $P$ is either cyclic or a generalized quaternion group. We have so proved $\isoc(G)\subseteq \prod_p \oo_p(Z(G)).$ Conversely assume $\tilde \pi=\{p_1,\dots,p_t\}$ and let
$x=x_1\cdots x_t$ with $x_i \in \oo_{p_i}(Z(G)).$ Let $y \in G$.  Since $x \in Z(G),$ $H=\langle x, y \rangle$ is an abelian subgroup of $G$. For any prime $p$ dividing $|G|,$ let $Q=\oo_p(\langle y \rangle)$ and $P=\oo_p(H).$ If $p \notin \tilde \pi$ then $Q=P$ is cyclic. If $p \in\tilde\pi,$ then $P$ is an abelian $p$-group with a unique minimal subgroup, so again it is cyclic. It follows that $H$ itself is cyclic and therefore $x \in \isoc(G).$
\end{proof}

\begin{cor}The graph $\Gamma_{\mathfrak C}(G)$ is connected for every finite group $G.$
\end{cor}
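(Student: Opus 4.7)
The strategy is to apply Theorem~\ref{main} with $\ff=\ccc$ to a hypothetical minimal counterexample and derive a contradiction from the uniqueness assertion about $W$. First I would check that $\ccc$ satisfies the hypotheses: cyclic groups are abelian, hence soluble, and the class is closed under taking subgroups; the proposition just proved shows that $\isoc(X)=\prod_{p\in\tilde{\pi}(X)}\op(\Z(X))$ is a subgroup of $X$ for every finite $X$, so every finite group is $\ccc$-semiregular. Thus if $G$ is of minimal order among finite groups with $\Gamma_{\mathfrak{C}}(G)$ disconnected, every proper subgroup of $G$ is still $\ccc$-semiregular and, by minimality, has connected non-$\ccc$ graph; Theorem~\ref{main} then produces an epimorphism $\pi\colon G\to V^t\rtimes H$ together with a unique $W\in\mathcal{W}$ satisfying $\pi^{-1}(W\rtimes H)\notin\mathfrak{C}_2$.

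The key intermediate observation is that $\mathfrak{C}_2=\ccc$: if every 2-generated subgroup of a finite group $X$ is cyclic, then every pair of elements of $X$ commutes, so $X$ is abelian; moreover no Sylow subgroup of $X$ can contain a copy of $C_p\times C_p$, so every Sylow is cyclic and $X$ itself is cyclic. Consequently, Theorem~\ref{main} asserts that exactly one $W\in\mathcal{W}$ has $\pi^{-1}(W\rtimes H)$ non-cyclic, while $\pi^{-1}(W^*\rtimes H)$ is cyclic for every other $W^*\in\mathcal{W}$.

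To close the argument I would rule out the case $H=1$ using the construction of $\pi$ in the proof of Theorem~\ref{main}, where $d(G/\ker\pi)=3$: indeed $H=1$ would give $V\cong\mathbb{F}_p$ and $t=1+\dim_{\mathbb{F}_p}V=2$, whence $d(V^t\rtimes H)=d(\mathbb{F}_p^2)=2$, a contradiction. With $H\neq 1$, faithfulness of the $H$-module $V$ implies that the diagonal action of $H$ on $W^*\cong V^{t-1}$ (note $t-1\geq 1$) is nontrivial, so $W^*\rtimes H$ is non-abelian and in particular non-cyclic; its preimage $\pi^{-1}(W^*\rtimes H)$, having a non-cyclic quotient, is itself non-cyclic. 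Since this holds for every $W^*\in\mathcal{W}$ and $|\mathcal{W}|=(|F|^t-1)/(|F|-1)\geq 2$ with $F=\kndo_H(V)$, we contradict the uniqueness of $W$ in Theorem~\ref{main}. I foresee no serious obstacle: the identity $\mathfrak{C}_2=\ccc$ and the incompatibility of cyclicity of $W^*\rtimes H$ with faithfulness of the $H$-action are both elementary, and Theorem~\ref{main} supplies all the structural content.
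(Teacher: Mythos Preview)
Your argument is correct, but the paper takes a shorter route that avoids the full strength of Theorem~\ref{main}. The paper observes that $\mathfrak{C}$ is also closed under epimorphic images, so a minimal counterexample $G$ is strongly $\mathfrak{C}$-semiregular and Lemma~\ref{strongly} applies: $G/N\in\mathfrak{C}_2=\mathfrak{C}$ for any minimal normal subgroup $N$, i.e.\ $G/N$ is cyclic. From this the paper deduces $d(G)\le 2$ (citing \cite{uni}) and invokes Lemma~\ref{duegen} directly for the contradiction. In other words, the paper stops at the $2$-generation obstruction rather than unpacking the $V^t\rtimes H$ structure and the uniqueness of $W$.

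Your approach instead mirrors the strategy of Lemma~\ref{ordini}, contradicting the uniqueness clause of Theorem~\ref{main}. This is perfectly valid, and your observation $\mathfrak{C}_2=\mathfrak{C}$ together with the non-abelianness of $W^*\rtimes H$ does the job once $H\neq 1$. The one mild weakness is that excluding $H=1$ forces you to reach into the \emph{proof} of Theorem~\ref{main} for the fact $d(G/\ker\pi)=3$, which is not part of its statement; you flag this honestly, and the fact is readily available there, so it is not a gap. The trade-off: the paper's argument is more self-contained and uses only the preliminary lemmas, while yours shows how the structural conclusion of the main theorem can be leveraged directly for classes where membership in $\ff_2$ forces strong algebraic constraints on quotients.
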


\begin{proof}
	Suppose that $G$ is a finite group of minimal order with respect to the property that $\Gamma_{\mathfrak C}(G)$ is not connected. Let $N$ be a minimal normal subgroup of $G$. By the previous proposition, $G$ is strongly $\mathfrak C$-semiregular, hence it follows from Lemma \ref{strongly} that $G/N$ is cyclic. But this implies that $G$ is 2-generated (see \cite{uni}), in contradiction with Lemma \ref{duegen}.
\end{proof}

\begin{prop}
Let $\mathfrak P$ be the class of the finite groups whose order is divisible by at most a unique prime. Then for any finite group $G$ 
the graph $\Gamma_{\mathfrak P}(G)$ is connected.
 \end{prop}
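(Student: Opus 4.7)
My plan is to avoid invoking Theorem \ref{main} altogether and argue by a direct case split, exploiting the fact that membership in $\mathfrak{P}$ depends only on the set of prime divisors of the group order. The guiding observation is that $\gen{g,h}\notin\mathfrak{P}$ whenever $\gen{g,h}$ contains elements whose orders involve two distinct primes, so edges of $\widetilde{\Gamma}_{\mathfrak P}(G)$ are easy to produce. If $G\in\mathfrak{P}$, every $2$-generated subgroup of $G$ lies in $\mathfrak{P}$, so every vertex of $\widetilde{\Gamma}_{\mathfrak P}(G)$ is isolated and $\Gamma_{\mathfrak P}(G)$ is the empty graph, hence (vacuously) connected. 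From now on I would assume $|G|$ is divisible by at least two primes.

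First I would dispatch the easy case in which $G$ contains an element $r$ whose order is not a prime power. Then for every $g\in G$ with $g\neq r$ one has $\gen{r}\leq\gen{g,r}$, so $\gen{g,r}\notin\mathfrak{P}$; in particular $r$ is adjacent to every other element, every vertex is non-isolated, and $\Gamma_{\mathfrak P}(G)=\widetilde{\Gamma}_{\mathfrak P}(G)$ is connected of diameter at most $2$.

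In the remaining case every element of $G$ has prime power order. I would first show $\mathcal{I}_{\mathfrak P}(G)=\{1\}$: for $x\neq 1$ of order $p^a$, Cauchy's theorem applied to any prime divisor $q$ of $|G|$ other than $p$ yields an element $y$ of order $q$, and $\gen{x,y}$ has order divisible by $pq$, so $x$ is non-isolated; whereas $1$ is isolated because under the current case assumption $\gen{1,y}=\gen{y}$ always has prime power order. Given vertices $g,h\in G\setminus\{1\}$ of orders $p^a$ and $q^b$, either $p\neq q$ (so $g$ and $h$ are directly adjacent) or $p=q$, in which case I pick $z$ of order a prime $s\neq p$ (available since $|G|$ has at least two prime divisors) and obtain the path $g\sim z\sim h$, the endpoints being distinct from $z$ because they have different prime orders. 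Thus $\Gamma_{\mathfrak P}(G)$ again has diameter at most $2$. There is no serious obstacle in this approach: the class $\mathfrak{P}$ is so rigidly determined by prime divisors that any non-trivial configuration yields a \emph{de facto} universal vertex, and the only point requiring a little care is the identification of the isolated set when every element of $G$ has prime power order.
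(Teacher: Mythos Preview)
Your argument is correct. The case split is sound, and in the residual case where every element has prime power order you correctly identify $\mathcal{I}_{\mathfrak P}(G)=\{1\}$ and build a path of length at most $2$ between any two nontrivial vertices using Cauchy's theorem.

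Your route, however, is genuinely different from the paper's. The paper also begins by disposing of the case where some element has non-prime-power order (universal vertex), but in the remaining case it does \emph{not} argue directly: instead it checks that for every $H\leq G$ the set $\mathcal{I}_{\mathfrak P}(H)$ is a subgroup (equal to $H$ when $H$ is a $p$-group and to $1$ otherwise), and then invokes Lemma~\ref{ordini}, whose proof rests on the structural Theorem~\ref{main}. So the paper treats this proposition as an application of its main machinery, while you bypass that machinery entirely with an elementary two-step path via an element of a different prime order. What you gain is a self-contained argument that moreover yields the sharper quantitative conclusion $\diam\Gamma_{\mathfrak P}(G)\leq 2$; what the paper's approach buys is a uniform template (verify semiregularity plus the order condition~(4), then conclude connectivity) that applies without further thought to this and the neighbouring classes treated in the same section.
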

\begin{proof}
We may assume that all the elements of $G$ have prime power order (otherwise $\gf(G)$ contains a universal vertex). In this case, let $H\leq G.$ We have
$\mathcal{I}_{\mathfrak P}(H)=H$ if $H$ is a $p$-group, $\mathcal{I}_{\mathfrak P}(H)=1$ otherwise. In any case $\mathcal{I}_{\mathfrak P}(H)$ is a subgroup of $H$ and therefore $\gf(G)$ is connected by Lemma 	\ref{ordini}.
\end{proof}
\begin{prop}
Let $\mathfrak D$ be the class of the finite groups $G$ with the property that $\pi(G)$ consists of at most two different primes.
\begin{enumerate}
	\item $\mathfrak D$ is 2-recognizable.
	\item For any $g$ in $G$, let $\pi(g)$ be the set of the prime divisors of $|g|.$ If $|\pi(g)|\geq 3$ for some $g \in G$, then $g$ is adjacent to all the other vertices of $\tilde \Gamma_{\mathfrak D}(G)$. If $|\pi(g)|\leq 2$ for every $g\in G,$ then $\isod(G)$ is a subgroup of $G.$
	\item For any finite group $G$ 
	the graph $\Gamma_{\mathfrak D}(G)$ is connected
	\end{enumerate}
\end{prop}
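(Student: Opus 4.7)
The plan is to prove (1), (2), (3) in that order, with (3) following from (1) and (2) via Theorem \ref{main}. For part (1), the implication $G\in\mathfrak D\Rightarrow$ every 2-generated subgroup is in $\mathfrak D$ is trivial, so I argue the converse contrapositively: assuming $|\pi(G)|\geq 3$, I produce a 2-generated $H\leq G$ with $H\notin\mathfrak D$. If some element $g\in G$ has $|\pi(|g|)|\geq 3$, take $H=\langle g\rangle$. Otherwise every element has $|\pi(|g|)|\leq 2$; if some $g$ satisfies $|\pi(|g|)|=2$, pick a third prime $r\in\pi(G)\setminus\pi(|g|)$, an element $y$ of order $r$, and take $H=\langle g,y\rangle$. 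The last possibility is that every element of $G$ has prime-power order; by Higman's theorem on CP-groups, $|\pi(G)|\geq 3$ forces $G$ to be non-soluble, so $G$ admits a non-abelian simple section $S$, and lifting a 2-element generating set of $S$ to $G$ gives a 2-generated non-soluble subgroup, which by Burnside's $p^aq^b$ theorem is not in $\mathfrak D$.

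Part (2) splits into two clauses. For the first, if $|\pi(|g|)|\geq 3$ then $\langle g\rangle\notin\mathfrak D$, hence $\langle g,h\rangle\notin\mathfrak D$ for every $h$, making $g$ universal in $\widetilde\Gamma_{\mathfrak D}(G)$. For the second, assume every element of $G$ has $|\pi(|g|)|\leq 2$; the case $|\pi(G)|\leq 2$ is immediate since then $\isod(G)=G$, so suppose $|\pi(G)|\geq 3$. I first observe that an isolated $g$ must have prime-power order, for otherwise choosing $y$ of a third prime order yields $|\pi(\langle g,y\rangle)|\geq 3$. To show that $\isod(G)$ is closed under multiplication I would proceed by induction on $|G|$, using Lemma \ref{FFF}(c), which makes $\isod(G)$ a characteristic subset of $G$, together with Lemma \ref{FFF}(b), which gives $\isod(G)\subseteq\varphi^{-1}(\isod(G/N))$ for the canonical map $\varphi\colon G\to G/N$ and any $1\neq N\nn G$; a direct analysis then rules out the coexistence of isolated elements $g,g'$ of distinct prime orders $p\neq q$ by combining them with an element of a third prime $r$ to produce a 2-generated subgroup whose order is divisible by three primes.

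For part (3), let $G$ be a counterexample of minimal order. If $G$ contains an element of $|\pi|\geq 3$, by (2) it is universal in $\widetilde\Gamma_{\mathfrak D}(G)$, so $\Gamma_{\mathfrak D}(G)$ is connected, contradicting minimality. Hence every element of $G$ has $|\pi|\leq 2$, and the same holds inside every subgroup $H\leq G$, so by (2) each $\isod(H)$ is a subgroup of $H$; in other words $G$ is $\mathfrak D$-semiregular. Applying Theorem \ref{main} produces an epimorphism $\pi\colon G\to V^t\rtimes H$ and a unique $W\in\mathcal W$ with $\pi^{-1}(W\rtimes H)\notin\mathfrak D_2$, while $\pi^{-1}(W^*\rtimes H)\in\mathfrak D_2$ for every other $W^*\in\mathcal W$. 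Since these two groups have the same order and, by (1), $\mathfrak D_2=\mathfrak D$ is a class whose membership depends only on the order, both belong to $\mathfrak D_2$ or neither does, contradicting uniqueness.

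The hard part will be the second clause of (2): showing that $\isod(G)$ is closed under multiplication when $|\pi(G)|\geq 3$. Examples such as $Q_8\times A_5$ (where $\isod(G)=Q_8\times\{1\}$) and $D_{10}\times C_3$ (where $\isod(G)=\{1\}\times C_3$) show that $\isod(G)$ need not be contained in $\Z(G)$, so a naive central $p$-element characterisation fails; one has to argue by induction on $|G|$, combining Lemma \ref{FFF} with the detection of forbidden three-prime 2-generated subgroups.
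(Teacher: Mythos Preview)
Your treatment of (1) and (3) is correct and matches the paper's. For (1) you argue contrapositively while the paper argues directly from $G\in\mathfrak D_2$, but the case split (some $|\pi(g)|\geq 3$; some $|\pi(g)|=2$; all elements of prime-power order, then Higman/Heineken plus Burnside for the non-soluble section) is the same. For (3) you unfold the proof of Lemma~\ref{ordini}: first dispose of a universal vertex, then apply Theorem~\ref{main} and use that membership in $\mathfrak D=\mathfrak D_2$ depends only on $|G|$ to contradict the uniqueness of $W$. That is exactly the paper's route.

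The gap is in (2). Your plan---show isolated vertices have prime-power order, then ``rule out the coexistence of isolated elements $g,g'$ of distinct prime orders $p\neq q$ by combining them with an element of a third prime $r$''---does not go through as stated. The obstruction is definitional: if $g$ is isolated then \emph{every} $2$-generated subgroup containing $g$ lies in $\mathfrak D$, so no choice of a third-prime partner can produce a forbidden $2$-generated subgroup from $g$ (or from $g'$) directly. Moreover, even if you could show that all nontrivial isolated elements are $p$-elements for a single prime $p$, that alone does not make $\isod(G)$ a subgroup.

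What the paper actually does in a minimal counterexample $G=\langle x,y,g\rangle$ (with $x,y\in\isod(G)$, $xy\notin\isod(G)$) is: pass to $G/M$ for a minimal normal $M$, observe via Lemma~\ref{FFF}(b) that $xM,yM\in\isod(G/M)$, and then use Lemma~\ref{Icyc} (since $G/M=\isod(G/M)\langle gM\rangle$) to force $G/M\in\mathfrak D$, so $\pi(G/M)=\{p_1,p_2\}$. The case $R(G)=1$ is eliminated by the Thompson-like characterisation of the soluble radical \cite{gu}. One is left with $G=M\rtimes H$, $M$ an elementary abelian $p$-group with $p\notin\{p_1,p_2\}$. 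The decisive step is then a complement-perturbation argument from \cite{dlt}: given a nontrivial $p_1$-element $g$, choose a $p_2$-element $g^*$ with $\langle g,g^*\rangle\leq H^*$ a conjugate of $H$, and find $m\in M$ so that $\langle g,g^*m\rangle$ is \emph{not} a complement of $M$ in $M\langle g,g^*\rangle$; then $p\,p_1\,p_2\mid|\langle g,g^*m\rangle|$, so $g\notin\isod(G)$. Symmetrically for $p_2$-elements. Hence $\isod(G)\subseteq M$, and one checks $\isod(G)=M$ or $\isod(G)=1$ according as $H$ has an element of order $p_1p_2$ or not. Your sketch is missing both the reduction via Lemma~\ref{Icyc} to $|\pi(G/M)|=2$ and, crucially, this perturbation of complements; without the latter there is no mechanism to exclude $p_1$- and $p_2$-elements from $\isod(G)$.
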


\begin{proof}
(1)	Assume that $G \in \mathfrak D_2$. This implies that 
$|\pi(g)|\leq 2$ for any $g$ in $G$ (if $|\pi(x)| >  2$ for some $x\in G$, then $\langle x, y \rangle \notin \mathfrak D$ for any $y\in G$). Assume that there exists $\tilde g \in G$ with $\pi(\tilde g)=\{p,q\}.$ Since  $G \in \mathfrak D_2,$ $\langle \tilde g, g\rangle \in \mathfrak D$ for all $g\in G$. But then $\pi(g) \subseteq \{p,q\}$ for all $g \in G$ and therefore $\pi(G)=\{p,q\}$ and $G \in   \mathfrak D.$
If $G$ is not soluble, then there exists $H\leq G$ and $K \unlhd H$ such that
$H/K \cong S$ is a finite non-abelian simple group. In particular
$H=\langle h_1,h_2\rangle K$ for some $h_1, h_2\in H.$ But then $S=\langle h_1,h_2\rangle K/K \cong \langle h_1,h_2\rangle/(K\cap \langle h_1,h_2\rangle),$ so $3\leq \pi(S) \leq \pi(\langle h_1,h_2\rangle),$ in contradiction with $\langle h_1,h_2\rangle 
\in \mathfrak D.$  So we may assume that $G$ is a finite soluble group all of whose elements have prime power order. 
By \cite[Theorem 1]{hei}, $\pi(G)\leq 2$, an again we conclude  $G \in   \mathfrak D.$
	
	\noindent (2) Suppose that $G$ is a finite group of minimal order with respect to the properties that $\isod(G)\neq \emptyset$ 
	and $\isod(G)$ is not a subgroup of $G$. 	Let $x,y\in \isod(G)$ such that $xy\notin \isod(G)$. There exists  $g\in G$  such that $\gen{xy, g}\notin \ddd$. Notice that the minimality property of $G$ implies $G=\gen{x,y,g}.$ Let $M$ be a minimal normal subgroup of $G$ and set $I/M:=\isod(G/M)\nn G/M$.  
	Since $G=\gen{x,y,g}$, we have $\gen{gM}I/M=G/M$.	By Lemma \ref{Icyc}, $G/M\in\ddd_2=\ddd$, so there exist two primes $p_1,p_2$ with $\pi(G/M)\subseteq \{p_1,p_2\}.$
	If $\R(G)=1$, then by \cite[Theorem 6.4]{gu}, for every $1\neq h \in G,$ there exists $\tilde h \in G$ such that $\langle h, \tilde h\rangle$ is not soluble. In particular there exists $\tilde x$ such that $\langle x, \tilde x\rangle \notin \mathfrak D,$
	in contradiction with $x \in \isod(G).$ Thus we may assume that $M$ is an elementary abelian $p$-group. We have $\pi(G/M)=\{p_1,p_2\}$ and $p \notin \{p_1, p_2\},$ otherwise	$G\in \mathfrak D.$
	So $G=M \rtimes H,$ with $H \cong G/M.$ If $g \in \isod(G),$ then $|\pi(g)|\leq 1.$ Suppose that $g$ is a non trivial $p_1$-element of $G$. Then there exists a conjugate $H^*$ of $H$ in $G$ and a non-trivial $p_2$-element $g^*$ such that
	$K=\langle g, g^*\rangle \leq H^*.$ If follows from \cite[Theorem 1]{dlt} that there exists $m \in M$ such that
	$\langle g, g^*m\rangle$ is not a complement of $M$ in $MK.$
	This implies that $p\cdot p_1 \cdot p_2$ divides $|\langle g, g^*m\rangle|$ and consequently $g\notin \isod(G).$ For the same reason if $g$ is a non-trivial $p_2$-element, then $g\notin \isod(G).$ Now we have two possibilities: 
	if $H$ contains no elements of order $p_1\cdot p_2,$ then $\isod(G)=M;$ otherwise $\isod(G)=1.$ In any case, $\isod(G)$ is a subgroup of $G$, against our assumption.
	
	\noindent (c) It follows from Lemma \ref{ordini}.
	\end{proof}

\section{Example 1}

The statement of Theorem \ref{main} cannot be improved. In this section we exhibit a class $\ff$ and a finite group $G$ containing a maximal subgroup $M$ with the property that the the connected components of $\gf(G)$ are precisely the subgraphs induced by the conjugates of $M$ in $G$.

Let $Q$ be the quaternion group and $V\cong C_3\times C_3$  a faithful irreducible $Q$-module. Consider the semidirect product
$X=(V_1\times V_2 \times V_3) \rtimes Q$, with $V_i \cong_Q V$ for
$1\leq i\leq 3.$ Let $T=V_1\times V_2\times V_3$ and $\mathcal U=\{U_1,\dots,U_9\}$ be the set of submodules $U\leq_Q T$ with the properties that $T=(V_1\times V_2) \oplus U.$ Fix $9$ different
primes $p_1,\dots,p_9$ with the property that 12 divides $p_i-1$ for $1\leq i \leq 9.$ Then $Y = V^2 \rtimes Q$ has a faithful irreducible action on a vector space of dimension 8 
over the field $\mathbb F_{p_i}$ with $p_i$ elements. So for $1\leq i\leq 9,$ we may consider an irreducible $X$-module, say $W_i$, with
$W_i \cong C_{p_i}^8$ and $C_X(W_i)=U_i.$ Now consider the semidirect product $$G=(W_1\times \dots \times W_9)\rtimes X.$$
Let $W=W_1\times \dots \times W_9.$ Let $\mathcal M$ be the set of the maximal subgroups $M$ with the property that $W\leq M$ but $T\not\leq M.$ The conjugacy classes of maximal subgroups
in $\mathcal M$ are parametrized by the set $\mathcal Z$ of the $Q$-submodules $Z$ of $T$ with $Z\cong_Q V^2.$
Indeed $\mathcal M$ is the disjoint union of the following families:
$$\mathcal M_Z=\{WZQ^t \mid t \in T\}, \text { with }Z \in \mathcal Z.$$ Let $Z^*=V_1\times V_2.$ Since $Z^* \oplus U_i = T,$ it follows that $W_i$ is a faithful irreducible $M/W$-module for any $M \in \mathcal M_{Z^*}$ and $1\leq i \leq 9.$ Suppose that $Z^* \neq Z \in \mathcal Z.$ There exists $i \in \{1,\dots,9\}$ such that $U_i\leq Z.$ This implies that if $M \in \mathcal M_Z$ then $C_M(W_i) > W$ and consequently the maximal subgroups in $\mathcal M_Z$ are not isomorphic to the ones in $\mathcal M_{Z^*}.$ 

Consider now the class $\mathfrak F$ of the finite groups with no subgroup isomorphic to $M=WZ^*Q.$

Notice that $M$ is 2-generated, while $G$ is not. In particular, if $g_1,g_2\in G,$ then there is an edge $(g_1,g_2)\in \gf(G)$ if and only if $\langle g_1, g_2\rangle  = M^t \in \mathcal M_{Z^*},$ for some $t \in V_3.$

For a finite group $Y$, denote by $\Omega(Y)$ the subset of $G$ consisting of the elements $x$ with the property that
$Y=\langle x, y \rangle$ for some $y \in Y.$

Assume  $g=wzh \in \Omega(M^t)$ with $w\in W, z\in Z^*$ and $h \in Q^t$. It must be $h\notin F^t$, being $F:=\frat(Q).$ Moreover it follows from \cite[Proposition 2.2]{nof} that $wzh \in \Omega(M^t)$  if and only if $zh\in \Omega(\Gamma(Z^*Q^t))$
and, since the action of $Q^t$ on $V$ is fixed-point-free, by Lemma \ref{fpf} the condition $h\notin F^t$ is sufficient to ensure $zh\in \Omega(\Gamma(Z^*Q))$. In particular $\iso(M^t)=WZ^*F^t$. If follows
that if $H\leq G,$ then
$$\iso(H)=\begin{cases}
WZ^*F^t& \text {if $H= M^t$ for some $t \in V_3,$}\\H& \text{otherwise}.	\end{cases}$$
The set $\Delta_t$ of the vertices of $\gf(M^t)$ coincides with
$WZ^*(Q^t \setminus F^t).$
 Moreover, since the action of $Q$ on $V$ is fixed-point-free,
 if $t_1\neq t_2$, then $\Delta_{t_1}\cap \Delta_{t_2}=\emptyset.$ 
So the connected components of $\gf(G)$ are
$WZ^*(Q^t \setminus F^t)$, with $t \in V_3.$

\begin{rem}
In the previous example $\gf(G)$ has 9 connected components. 
We may repeat a similar construction starting from a faithful irreducible action of $Q$ on $C_p\times C_p$, where $p$ is an arbitrary odd prime. In this way we may construct a group $G$ with the property that $\gf(G)$ has  $p^2$ connected components.
\end{rem}

\section{Example 2}

In the previous section we have constructed an example of a group
$G$ satisfying the assumptions of Theorem \ref{main}, in which the graph $\gf(G)$ is disconnected and contains a maximal subgroup $M$ with the property that each arc in the graph belong to the subgraph induced by a conjugate of $M$. However, as it is indicated in the statement of the theorem, there is also the possibility that $G$ contains a proper normal subgroup $N$ in such a way 
that each arc in the graph belongs either to the subgraph induced by a conjugate of $M$ or to the subgraph induced by $N$. In this section we exhibit an example in which this second possibility occur. 

Let $A=\langle a_1\rangle \times \langle a_2\rangle$ with $\langle a_1 \rangle \cong \langle a_2\rangle \cong C_8$ and $Q=\langle b_1, b_2\rangle \cong Q_8.$ Both $A$ and $B$ admit an automorphism of order 3. So we may consider the semidirect
product $X=(A\times Q)\rtimes H$ with $H \cong C_3.$ We assume that the map $V_1=A/\frat(A)\to V_2=B/\frat(B)$ sending $a_1\frat(A)$ to $b_1\frat(B)$ and $a_2\frat(A)$ to $b_2\frat(B)$ is an $H$-isomorphism.
The group $X$ has precisely 6 conjugacy classes of maximal subgroups:
\begin{itemize}
	\item $M_0=A \times Q;$
	\item $M_1=(A \times \langle c \rangle) \cong A \rtimes C_6$ with
	$c=b_1^2=b_2^2;$
	\item $M_2=(A^2 \times Q_8)H$;
	\item $M_h=\langle a_1^hb_1, a_2^hb_2\rangle H,$ with $h \in H.$
\end{itemize}
Moreover $QH$ has an irreducible action on $W\cong C_7 \times C_7$ 
with the property that $Q$ acts fixed-point-freely on $W$ while $C_H(W)=Z \cong C_7.$ This action can be extended to $X$, with $A\leq C_X(W).$
Let $G=W \rtimes X.$
Let 
$$B=AH\times C_7,\quad  C = W \rtimes \langle a_1b_1, b_2\rangle.$$
Moreover let $\mathfrak F$ be the class of finite groups with no subgroup isomorphic to $B$ or $C.$

Suppose that
$g_1=w_1x_1, g_2=w_2x_2$ have the property that $Y=\langle g_1,g_2\rangle \notin \mathfrak F.$ We have two possibilities.

\noindent a) $Y$ has a subgroup isomorphic to $B$. It must be $\langle x_1, x_2 \rangle \leq  M_1^q$ for some $q\in Q$ (since $X$ is not 2-generated and the other maximal subgroups of $X$ with order divisible by 3 do not contain an element of order 8 centralizing a non-trivial element of $W$). Moreover $(ZAH)^q$ is the unique subgroup of $WM_1^q$ isomorphic to $B.$

\noindent b) $Y$ has a subgroup isomorphic to $C.$ Let $J=\langle x_1, x_2\rangle$ and let $M$ be a maximal subgroup of $X$ containing $J.$ It cannot be that $M$ is a conjugate of $M_1,$ since all the 2-subgroups of $M_1$ are abelian. It cannot be that $M$ is a conjugate of $M_2,$ since $M_2$ has no element of order $8$. It cannot be that $M$ is a conjugate
of $M_h,$
since all the elements of order 2 in $M_h$ centralize $W$ while $b_2^2$ does not. So $x_1=r_1s_1,$ $x_2=r_2s_2$ with $r_i \in A, s_i \in Q.$ As in the previous case
we want to describe when a subgroup $\tilde C$ of $M$ is isomorphic to $C$. This occurs if and only if $\tilde C=\langle
r_1s_1,r_2s_2\rangle,$ with $\langle s_1, s_2 \rangle=Q,$
$r_1,r_2\in A,$ $\langle r_1,r_2\rangle/(\langle r_1,r_2\rangle \cap A^2)\cong C_2.$

Let $\ff_B$ (resp. $\ff_C$) the class of finite groups with no subgroup isomorphic to $B$ (resp. $C$).
Let now $K$ be a proper subgroup of $G$, and assume $K \notin \ff_2.$ Since $M_1^q \cap M_0=A\times \langle c \rangle$ for any $q \in Q$, $K$ cannot contain both a subgroup isomorphic to $B$ and a subgroup isomorphic to $C$ so we have two possibilities

\noindent a) $(ZAH)^q \leq K \leq WM_1^q$ for some $q \in Q.$ In this case
$\gf(K)=\Gamma_{\ff_B}(K)$ and $\iso(K)=K\cap (W\frat(A)\langle c \rangle).$

\noindent b) $K\leq M_0$. In this case
$\gf(K)=\Gamma_{\ff_C}(K)$ and $\iso(K)=K\cap (WA\langle c \rangle).$

In particular
$$\Omega_B = \cup_g WM_1^g \setminus W\frat(A)\langle c \rangle
\quad \text { and }\quad \Omega_C = WM_0 \setminus WA\langle c \rangle$$
are, respectively,
the set of vertices of $\gf(G)$ that are non isolated in $\gf(WM_1^q)$ for some $q$ in $Q$ and the set of vertices of $\gf(G)$ that are 
 non isolated in $\gf(WM_0).$
Since $\Omega_B \cap \Omega_C=\emptyset,$ the graph $\gf(G)$ is disconnected (and indeed $\Omega_B$ and $\Omega_C$ are the two connected components of the graph). Moreover $$\iso(G)=(G\setminus \Omega_B) \cap (G \setminus \Omega_C)=W(M_0 \cap (\cap_{q\in Q} M_1^q)) \cup W\frat(A)\langle c \rangle=W\frat(A)\langle c \rangle.$$

\end{document}

\begin{thm}\label{regolari}
	A formation $\ff$ is regular \ifa every finite \gr $G$ which is soluble and strongly critical for $\ff$ has the property that $G/\soc(G)$ is cyclic.
\end{thm}

\begin{lemma}\label{duemax}
	Let $G$ be a finite group. Suppose that $M$ and $L$ are maximal subgroups of $G$ with $M, L \notin \fff$ and that
	the graphs $\gf(L)$ and $\gf(M)$ are connected. Denote by $\Gamma_M$ (resp. $\Gamma_L$) the connected component of
	$\gf(G)$ containing the isolated vertices
	of $\gf(L)$ (resp. $\gf(M)$).
	\begin{enumerate}[label=(\alph*)]
		\item 
		If $L_G\not\leq M_G,$ then either $\Gamma_L=\Gamma_M$ or $L\cap M \subseteq \iso(L).$ 
		\item If $M_G\not\leq L_G$ and $L_G\not\leq M_G,$ then $\Gamma_L=\Gamma_M.$
	\end{enumerate}
\end{lemma}
\begin{proof} (a)
	If $\Gamma_M \neq \Gamma_L,$ then $L\cap M \subseteq \iso(M) \cup \iso(L).$ Thus either $L\cap M \leq \iso(L)$ or $L\cap M \leq \iso(M)$.
	In the second case, by Theorem \ref{masu},
	$L\cap M$ is a a maximal subgroup of $M,$
	hence $M=(L\cap M)\langle m \rangle$ for some $m \in M.$ But then $M=\iso(M)\langle m\rangle.$ By Lemma \ref{Icyc}, $M \in \fff,$ against the hypothesis. 
	
	\noindent (b) If $\Gamma_M\neq \Gamma_L,$ then, by (a), $L\cap M \leq\iso(L) \cap \iso(M)$, but then $L, M \in \fff.$
\end{proof}
Moreover $\iso(\tilde B)=\langle a_1^2,a_2^2,Z\rangle$ and $\iso(M_1)=\langle a_1^2,a_2^2,i,W\rangle.$
Moreover it must be $\langle s_1, s_2 \rangle=Q.$ Let $C^*=\langle a_1b_1, b_2\rangle$ and $F=\frat(C^*)$
We claim that in $C$ all the elements outside $FW$ are not isolated in the generating graph. Let $z=ws$ with $w\in W$, $s\in C \setminus F.$ There exists $s^*$ with $\langle s, s^*\rangle=C.$
This $s\in V(C^*)$ and consequently $ws \in V(C).$